\numberwithin{equation}{section}
\newtheorem{theorem}{Theorem}[section]
\newtheorem{lemma}[theorem]{Lemma}
\newtheorem{remark}[theorem]{Remark}
\newtheorem{proposition}[theorem]{Proposition}
\theoremstyle{definition}
\newtheorem{definition}[theorem]{Definition}
\newcommand{\dd}{\,\mathrm{d}}
\renewcommand{\d}{\mathrm{d}}
\newcommand{\R}{\mathbb{R}}
\newcommand{\N}{\mathbb{N}}
\newcommand{\X}{{\bf X}}
\newcommand{\var}{\text{-}\mathsf{var}}
\newcommand{\vertiii}[1]{{\left\vert\kern-0.25ex\left\vert\kern-0.25ex\left\vert #1 \right\vert\kern-0.25ex\right\vert\kern-0.25ex\right\vert}}
\newcommand{\Lip}{\mathrm{Lip}}
\renewcommand{\L}{\mathcal{L}}
\title{On Sobolev rough paths}
\author[Liu]{Chong Liu}
\address{Chong Liu, University of Oxford, United Kingdom}
\email{chong.liu@maths.ox.ac.uk}
\author[Pr\"omel]{David J. Pr\"omel}
\address{David J. Pr\"omel, University of Mannheim, Germany}
\email{proemel@uni-mannheim.de}
\author[Teichmann]{Josef Teichmann}
\address{Josef Teichmann, Eidgen\"ossische Technische Hochschule Z\"urich, Switzerland}
\email{josef.teichmann@math.ethz.ch}
\date{\today}
\begin{document}

\begin{abstract}
  We introduce the space of rough paths with Sobolev regularity and the corresponding concept of controlled Sobolev paths. Based on these notions, we study rough path integration and rough differential equations. As main result, we prove that the solution map associated to differential equations driven by rough paths is a locally Lipschitz continuous map on the Sobolev rough path space for any arbitrary low regularity~$\alpha$ and integrability~$p$ provided $\alpha >1/p$. 
\end{abstract}

\maketitle

\noindent\textbf{Keywords:} It{\^o}--Lyons map, Sobolev space, rough differential equation, rough path. \\
\textbf{MSC 2020 Classification:} 60L20.



\section{Introduction}

Loosely speaking, a \textit{rough path} in the sense of T. Lyons~\cite{Lyons1998} is a path $\X$ from $[0,1]$ taking values in a suitable algebraic structure, namely the step-$N$ free nilpotent group $G^N(\mathbb{R}^d)$, and possessing sufficient regularity such as $\alpha$-H\"older continuity or $1/\alpha$-variation for $\alpha >1/N$. While rough path theory found many successful applications over the past two decades, its original motivation is to study so-called \textit{rough differential equations} (RDEs) 
\begin{equation}\label{eq:intro RDE}
  \d Y_t = V(Y_t)\dd \X_t, \quad Y_0 = y_0, \quad t \in [0,1],
\end{equation}
where $y_0 \in \R^e$ is an initial value and $V$ is a smooth vector field on $\R^e$  mapping into the linear operators from $\R^d$ to $\R^e$.

As long as the driving signal~$\X=X$ is simply a path $X\colon [0,1]\to\R^d$, which is at least weakly differentiable with $p$-integrable derivative, that is, $X$ belongs to the Sobolev space $W^{1}_p$, the rough differential equation~\eqref{eq:intro RDE} is a classical object in analysis known as controlled ordinary differential equations, see, e.g., \cite{Filippov1988}. Controlled differential equations appear in several different areas of analysis or geometry. The regularity of the driving signal plays an important role there, as well as the metric properties of the spaces of driving signals, e.g.~its reflexivity or strict convexity: for instance in sub-Riemannian geometry, see \cite{Montgomery2002}, the geodesic problem leads to a minimal energy problem for horizontal paths, which can be solved due to the Hilbert space structure of the space of driving signals. Or in machine learning, where controlled differential equations can be regarded as a continuous depth version for deep feed forward neural networks, see \cite{Cuchiero2019}, the target problem in the spirit of the Chow-Rashevskii theorem leads to a minimal energy problem involving again strong metric properties of the space of driving signals.

The problem of choosing appropriate spaces of rough paths becomes considerably more involved when path regularity decreases, since non-linear effects appear. As soon as the driving signal~$X$ is a sample path of a stochastic process like a Brownian motion, the RDE~\eqref{eq:intro RDE} (then also known as stochastic differential equation) cannot be treated anymore by classical methods from real analysis, cf. \cite{Lyons1991}, but needs stochastic methods. An alternative way, which fully clarifies the nature of the appearing non-linearity, is to assume that the driving signal~$\X$ is a rough path in the sense of T.~Lyons. Then, the theory of rough paths establishes that~\eqref{eq:intro RDE} possesses a unique solution~$Y$ and the solution map $\X\mapsto Y$ is locally Lipschitz continuous with respect to suitable rough path metrics. In the context of rough path theory the map $\X\mapsto Y$ is often called \textit{It\^o--Lyons map}. For more detailed introductions to rough path theory we refer to~\cite{Lyons2007,Lejay2009,Friz2010,Friz2014}.

It is well-known that the space of rough paths can be introduced in various ways by postulating different regularity properties in the definition of a rough path. Of course, all these rough path spaces share the fundamental feature that the It\^o--Lyons map is locally Lipschitz continuous with respect to the corresponding distances on the underlying rough path spaces, see e.g. \cite{Friz2010} or \cite{Friz2018} and the references therein. The aim of the present paper is to introduce the fractional Sobolev regularity as defining regularity property of a rough path and show the local Lipschitz continuity of the It\^o-Lyons map.

The metric structure of Sobolev spaces $W^\alpha_p$ for $1<p<+\infty$ offers many favourable properties which are not provided by the frequently used distances on the rough path spaces such as H\"older or $p$-variation distances. Among others, let us mention for instance that the real-valued Sobolev spaces are known to be strictly convex, separable, reflexive, UMD Banach spaces of martingale type~$2$. Some of these properties are essential to solve optimization problems or to set up stochastic integration. Furthermore, Sobolev settings allow for better moment estimates in the context of stochastic partial differential equations. In the theory of regularity structure~\cite{Hairer2014} and of paracontrolled distributions~\cite{Gubinelli2015}, which are both closely related to rough path theory, the aforementioned favourable properties lead to a recent effort to introduce Sobolev distances or the even more general Besov distances in these theories, see e.g. \cite{Hairer2017,Liu2016,Hensel2020} for regularity structures and e.g. \cite{Promel2016,Martin2019,Hoshino2020} for paracontrolled distributions.

The fractional Sobolev spaces appear naturally in the study of differential equations in classical analysis and of stochastic differential equations, for example, when working with with Cameron--Martin spaces. These Sobolev spaces appear even in the context of rough differential equations, see \cite{Cass2010}. However, Sobolev distances on the space of rough paths are not used so far. The main reason for this stems from the fact that, in general, it has been unclear whether the It\^o--Lyons map is locally Lipschitz continuous with respect to the inhomogeneous Sobolev distance, without losing regularity\footnote{i.e.~mapping $X$ with Sobolev regularity $\alpha$ to $Y$ with Sobolev regularity $\beta$ for $\beta <\alpha$}.

In Section~\ref{sec: Sobolev rough path} and~\ref{sec: Sobolev controlled rough paths} we introduce the space of Sobolev rough paths and the corresponding space of controlled paths of Sobolev type. As a first step, we demonstrate that these spaces lead to rough path integration with its known properties and to standard stability results as usually offered by rough path theory. Our approach is based on a novel discrete characterization of (non-linear) Sobolev spaces (see~\cite{Liu2020}) in combination with classical estimates from rough path theory and Sobolev-variation embedding theorems (see also~\cite{Friz2006}).

In Section~\ref{sec: Sobolev RDE} and \ref{sec: Ito map} we manage to obtain the local Lipschitz continuity of the It\^o--Lyons map acting on the space of Sobolev rough paths with arbitrary low regularity $\alpha>0$ and integrability $p$ such that $\alpha >1/p$. Although our proof again utilizes some of the sophisticated estimates from rough path theory, the Sobolev distances creates some new challenges mainly because of its missing direct link to a control function. Indeed, let us recall that numerous definitions of rough path spaces rely on metrics closely related the concept of so-called control functions~$\omega$, which provide good estimates of increments of the type $|Y_{t}-Y_{s}|\leq \omega (s,t)$ such as the $p$-variation norm with $\omega(s,t):=\|Y\|_{p\var;[s,t]}^p$. While these type of estimates make it convenient to work $p$-variation or related semi-norms, the fractional Sobolev norm does not come with such convenient estimates of increments of rough paths.

The present work confirms that the Sobolev regularity offers a suitable topology on the space of rough paths and that the solution theory for rough differential equations naturally extends the classical solution theory of controlled ordinary differential equations based on Sobolev spaces. Additionally, this guarantees the access to the above mentioned favourable properties.
 
\medskip
\noindent{\bf Organization of the paper:} In Section~\ref{sec: Sobolev rough path} we introduce the space of Sobolev rough paths. Controlled paths of Sobolev type are discussed in Section~\ref{sec: Sobolev controlled rough paths} and rough differential equations driven by Sobolev rough paths are studied in Section~\ref{sec: Sobolev RDE}. The local Lipschitz continuity of the It\^o--Lyons map acting on the space of Sobolev rough paths with arbitrary low regularity is provided in Section~\ref{sec: Ito map}.

\medskip
\noindent{\bf Acknowledgment:} C.~Liu and J.~Teichmann gratefully acknowledge support by the ETH foundation. D.J.~Pr{\"o}mel and J.~Teichmann gratefully acknowledge support by the SNF Project 163014.

\section{Sobolev rough path space}\label{sec: Sobolev rough path}

The definition of a rough path in the sense T. Lyons \cite{Lyons1998} basically consists of two components: an algebraic structure and an analytic regularity condition. While we work with the standard algebraic structure, we shall introduce a Sobolev regularity, which is in contrast to the common approaches in rough path theory, cf. \cite{Lyons2007,Lejay2009,Friz2010,Friz2014}.

We start by recalling some basic notation and definitions from rough path theory, as used e.g. in \cite{Friz2010}, and introduce the underlying algebra structure, which can be conveniently described by the free nilpotent Lie group $G^N(\R^d)$. Let $\R^d$ be the Euclidean space with norm~$| \cdot |$ for $d\in \N$. The tensor algebra over $\R^d$ is defined by 
\begin{equation*}
  T(\R^d) := \bigoplus_{n=0}^\infty (\R^d)^{\otimes n}
\end{equation*}
where $\big(\mathbb{R}^d\big)^{\otimes n}$ denotes the $n$-tensor space of $\mathbb{R}^d$ with the convention $(\R^d)^{\otimes 0}:=\mathbb{R}$. We equip $T(\R^d)$ with the standard addition $+$, tensor multiplication~$\otimes$ and scalar product.

Let $C^{1\var}([0,1];\mathbb{R}^d)$ be the space of all continuous functions $Z\colon [0,1]\to \R^d$ of finite variation. For $N\in\mathbb{N}$ and a path $Z\in C^{1\var}([0,1];\mathbb{R}^d)$, its step-$N$ signature is defined by 
\begin{align*}
  S_N(Z)_{s,t}:=&\bigg (1, \int_{s<u<t}\dd Z_u, \dots, \int_{s<u_1<\dots <u_N<t}\dd Z_{u_1} \otimes \cdots \otimes \d Z_{u_N} \bigg) \\
  &\in T^N(\mathbb{R}^d):= \bigoplus_{k=0}^N \big(\mathbb{R}^d\big)^{\otimes k}\subset T(\R^d),
\end{align*}
cf. \cite[Definition~7.2]{Friz2010}. The corresponding space of all these lifted paths is the step-$N$ free nilpotent group (w.r.t. $\otimes$) 
\begin{equation*}
  G^N(\mathbb{R}^d) := \{S_N(Z)_{0,1} \,:\, Z\in C^{1\var}([0,1];\mathbb{R}^d)\}\subset T^N(\mathbb{R}^d).
\end{equation*}
On $G^N(\mathbb{R}^d)$ one usually works with two types of complete metrics: the first metric is given by
\begin{equation*}
  \rho(g,h) := \max_{i=1,\dots,N}|\pi_i(g - h)|\quad \text{for} \quad g,h \in G^N(\R^d),
\end{equation*}
where $\pi_i$ denotes the projection from $\bigoplus_{i=0}^N (\R^d)^{\otimes i}$ onto the $i$-th level. We set $|g|:=\rho(g, 1)$ for $g \in G^N(\R^d)$. The second one is the Carnot--Caratheodory metric $d_{cc}$, which is given by
\begin{equation*}
  d_{cc}(g,h) := \|g^{-1} \otimes h\|_{cc}  \quad \text{for} \quad  g,h \in G^N(\R^d),
\end{equation*}
where $\| \cdot \|_{cc} $ is the Carnot--Caratheodory norm defined via \cite[Theorem~7.32]{Friz2010}, cf. \cite[Definition~7.41]{Friz2010}. These two metrics are in general not equivalent (unless $d = 1$) in the sense that there exist constants $C_1,C_2$ such that $C_1 \rho(g,h) \leq d_{cc}(g,h) \leq C_2 \rho(g,h)$ for all $g,h \in G^N(\R^d)$. However, one has  
\begin{equation*}
  \rho(g,h) \leq C d_{cc}(g,h) \quad \text{and} \quad  d_{cc}(g,h) \leq C \rho(g,h)^{1/N},
\end{equation*}
for some constant $C>0$, uniformly on bounded sets (w.r.t. the Carnot--Caratheodory norm), see \cite[Proposition~7.49]{Friz2010}. In the following we equip the free nilpotent Lie group $G^N(\R^d)$ with the Carnot--Caratheodory metric~$d_{cc}$, which turns $G^N(\R^d)$ into a complete geodesic metric space. For a path $\X\colon [0,1]\to G^N(\R^d)$, we set $\X_{s,t} := \X_s^{-1} \otimes \X_t$ for any subinterval $[s,t] \subset [0,1]$. We refer to \cite[Chapter~7]{Friz2010} for a more comprehensive introduction to $G^N(\R^d)$. 
\medskip

Next we introduce the analytic regularity conditions required on a rough path. A partition~$\pi$ of an interval $[s,t]$ is a collection of finitely many essentially disjoint interval covering $[s,t]$, i.e., $\mathcal{P} := \{ [t_{k-1},t_{k}]\,:\,s=t_0<t_1<\dots<t_n=t,\, n\in \N\}$. In this case we write $\mathcal{P}\subset [s,t]$ indicating that $\mathcal{P}$ is a partition of the interval $[s,t]$. Furthermore, for such a partition $\mathcal{P}$ and a function $\chi \colon \{ (u,v) \,:\,  s\leq u <v\leq t\}\to \R$ we use the abbreviation
\begin{equation*}
  \sum_{[u,v]\in \mathcal{P}} \chi(u,v):= \sum_{i=0}^{n-1} \chi(t_i,t_{i+1}).
\end{equation*}
In the following, if not otherwise specified, $(E,d)$ denotes a metric space and $C([0,1];E)$ stands for the set of all continuous functions $f\colon [0,1] \to E$. We can obtain a metric thereon by $d_\infty(f,g) := \sup_{0\leq t\leq 1}d(f(t),g(t))$. If $E$ is normed vector space with norm~$\|\cdot \|$, we define $\|f\|_{\infty}:=\sup_{0\leq t\leq 1}\|f(t)\|$. The \textit{$q$-variation} of a function $f\in C([0,1];E)$ is defined by
\begin{equation}\label{eq:p-varition}
  \|f\|_{q\var;[s,t]}:=\bigg(\sup_{\mathcal{P}\subset [s,t]} \sum_{[u,v]\in \mathcal{P}} 
  d(f_u,f_v)^q  \bigg)^{1/q},\quad q\in[1,+\infty),
\end{equation}
where the supremum is taken over all partitions $\mathcal{P}$ of the interval $[s,t]$. The set of all functions $f\in C([0,1];E)$ with $\|f\|_{q\var}:= \|f\|_{q\var;[0,1]}<\infty$ is denoted by $C^{q\var}([0,1];E)$. For $r \in \R_{+}$ we set 
\begin{equation*}
  [r] := \sup\{n \in \mathbb{Z}\, :\, n \leq r\}\quad \text{and}\quad\lfloor r \rfloor := \sup\{n \in \mathbb{Z}\, :\, n < r\}.
\end{equation*}

The space of all \textit{weakly geometric rough paths} of finite $q$-variation is then given by 
\begin{equation*}
  \Omega^q:=C^{q\var}([0,1];G^{\lfloor q \rfloor}(\mathbb{R}^n))
  := \bigg\{\X\in C([0,1];G^{\lfloor q \rfloor}(\mathbb{R}^n))\,:\, \|\X\|_{q\var} <\infty \bigg\},
\end{equation*}
where $\|\,\cdot\,\|_{q\var}$ is the $q$-variation with respect to the metric space~$(G^{\lfloor q \rfloor}(\mathbb{R}^n),d_{cc})$ as defined in~\eqref{eq:p-varition}. Let us remark that $\|\,\cdot\,\|_{q\var}$ on $\Omega^q$ is often called the homogeneous rough path norm because it is homogeneous with respect to the dilation map on $T^{\lfloor q \rfloor}(\R^n)$, cf.~\cite[Definition~7.13]{Friz2010}. The $q$-variation norm is frequently used in rough path theory but it is well-known that there exists a cascade of good metrics to measure the regularity of a rough path, see e.g. \cite{Friz2018} for a discussion about rough path metrics. 

In contrast to the commonly used metrics in rough path theory, we shall consider fractional Sobolev metrics. For this purpose, let recall the definition of Sobolev regularity for functions mapping into a metric space~$E$. For $\alpha \in (0,1)$, $p \in (1,+\infty)$ and a function $f\in C([0,T];E)$ we define the \textit{fractional Sobolev regularity} by
\begin{equation}\label{eq:Sobolev norm}
  \|f\|_{W^{\alpha}_p;[s,t]}:= \bigg( \iint_{[s,t]^2} \frac{d(f(u),f(v))^p}{|v-u|^{\alpha p+1}} \dd u \dd v\bigg)^{1/p}
\end{equation} 
and in the case of $p=+\infty$ we set 
\begin{equation*}
  \|f\|_{W^{\alpha}_p;[s,t]}:= \sup_{u,v\in[s,t],\,} \frac{d(f(u),f(v))}{|v-u|^\alpha},
\end{equation*} 
where $[s,t]\subset [0,1]$. The latter case is also known as H\"older regularity. Furthermore, we set $\|f\|_{W^{\alpha}_p}:=\|f\|_{W^{\alpha}_p;[0,1]}$. The space $W^{\alpha}_p([0,1];E)$ consists of all continuous functions $f\colon [0,1]\to E$ such that $\|f\|_{W^{\alpha}_p}<\infty$. For a continuous function $f\colon [0,1] \to E$, the fractional Sobolev (semi)-distance can be equivalently defined in a discrete way by 
\begin{equation}\label{eq:discrete Sobolev norm}
  {\|f\|}_{W^{\alpha}_p,(1)} :=\bigg( \sum_{j \geq 0} 2^{j( \alpha p - 1)} \sum_{m=0}^{2^j-1}  d \big(f(\frac{m}{2^j}),f(\frac{m+1}{2^j}) \big)^p \bigg)^{1/p}.
\end{equation}
Indeed, the next theorem recalls this equivalence of the Sobolev distances, which was shown in \cite[Theorem~2.2]{Liu2020} even  for the more general class of Besov spaces. 

\begin{theorem}\label{thm:Sobolev norm equivalent}
  Let $\alpha \in (0,1)$ and $p\in (1,+\infty]$ be such that $\alpha >1/p$. Then, there exist two constants $C_1,C_2>0$ depending only on $\alpha$ and $p$ such that 
  \begin{equation*}
    C_1  {\|f\|}_{W^{\alpha}_p,(1)}  \leq \|f\|_{W^{\alpha}_p} \leq C_2 {\|f\|} _{W^{\alpha}_p,(1)},\quad f\in C([0,1];E).
  \end{equation*}
\end{theorem}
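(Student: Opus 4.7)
The plan is to establish both inequalities via dyadic decompositions of $[0,1]^2$ combined with chaining arguments, using crucially the hypothesis $\alpha p > 1$. Throughout, $\lesssim$ denotes inequality up to a constant depending only on $\alpha$ and $p$. For the direction $\|f\|_{W^{\alpha}_p,(1)} \lesssim \|f\|_{W^{\alpha}_p}$ I introduce the rectangles
\begin{equation*}
R_{j,m} := (m/2^j,(2m+1)/2^{j+1}) \times ((2m+1)/2^{j+1},(m+1)/2^j), \qquad j \ge 0,\ 0 \le m < 2^j,
\end{equation*}
which partition $\{(u,v) \in [0,1]^2 : u < v\}$ modulo a null set, since each pair lies in the unique minimal dyadic interval containing both and straddles its midpoint. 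On $R_{j,m}$, writing $s := m/2^j$ and $t := (m+1)/2^j$, the triangle inequality together with the power-mean inequality yields $d(f(s),f(t))^p \lesssim d(f(s),f(u))^p + d(f(u),f(v))^p + d(f(v),f(t))^p$. Averaging over $R_{j,m}$ (of measure $2^{-2j-2}$) and using $|v-u| \le 2^{-j}$ to bound the central term by $2^{-j(\alpha p+1)} d(f(u),f(v))^p/|v-u|^{\alpha p+1}$, multiplication by the weight $2^{j(\alpha p-1)}$ and summation over $(j,m)$ collapse that contribution, by disjointness of the $R_{j,m}$, to a constant multiple of $\|f\|_{W^{\alpha}_p}^p$. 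The outer boundary terms are controlled via the local fractional Morrey-type inequality $d(f(s), f(u))^p \lesssim (u-s)^{\alpha p -1} \iint_{[s,u]^2} d(f(x),f(y))^p |y-x|^{-\alpha p -1} \dd x \dd y$ (valid precisely because $\alpha p > 1$) combined with careful dyadic bookkeeping so that each pair $(x,y)$ is charged only boundedly often.

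For the reverse direction $\|f\|_{W^{\alpha}_p} \lesssim \|f\|_{W^{\alpha}_p,(1)}$, I decompose the Sobolev integral into dyadic annuli $A_k := \{(u,v) \in [0,1]^2 : 2^{-k-1} \le |v-u| < 2^{-k}\}$, giving
\begin{equation*}
\|f\|_{W^{\alpha}_p}^p \lesssim \sum_{k \ge 0} 2^{k(\alpha p + 1)} \iint_{A_k} d(f(u),f(v))^p \dd u \dd v.
\end{equation*}
For $(u,v) \in A_k$, I approximate by dyadic truncations $u_j := 2^{-j} \lfloor 2^j u \rfloor$ and $v_j := 2^{-j} \lfloor 2^j v\rfloor$; the triangle inequality then yields $d(f(u),f(v)) \le d(f(u),f(u_k)) + d(f(u_k),f(v_k)) + d(f(v_k),f(v))$. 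The middle piece involves only a uniformly bounded number of level-$k$ dyadic increments since $|v_k - u_k| \lesssim 2^{-k}$, and each tail telescopes as $d(f(u),f(u_k)) \le \sum_{j \ge k} d(f(u_j),f(u_{j+1}))$, with every non-vanishing summand being a dyadic increment at level $j+1$. Applying H\"older's inequality in the $p$-th power with geometric weights $2^{-j\varepsilon}$ for some small $\varepsilon > 0$ converts each telescope into a weighted $\ell^p$ sum; integration over $A_k$ and summation in $k$ then reorganize the resulting double sum, via Fubini, into $\|f\|_{W^{\alpha}_p,(1)}^p$ up to a constant depending on $\alpha$, $p$ and $\varepsilon$.

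The main technical obstacle I anticipate is the overcounting inherent in both directions: ensuring that the boundary terms in the first direction are absorbed without an extra logarithmic factor in the dyadic level, and that the chaining in the second direction charges each dyadic increment $d(f(m/2^k),f((m+1)/2^k))^p$ with total weight of precisely order $2^{k(\alpha p -1)}$, as demanded by the statement. The strict inequality $\alpha > 1/p$ enters non-trivially at two points: it underpins the fractional Morrey local embedding used for the boundary terms, and it guarantees the summability of the geometric weights in the chaining step. A careful execution of this bookkeeping, in the more general Besov setting, is carried out in the reference~\cite{Liu2020}.
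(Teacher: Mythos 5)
The first thing to note is that the paper does not prove this theorem at all: it is quoted from \cite[Theorem~2.2]{Liu2020}, so there is no internal proof to compare against, and your sketch has to stand on its own. Your second direction, $\|f\|_{W^\alpha_p}\lesssim\|f\|_{W^\alpha_p,(1)}$, is essentially correct and standard (dyadic annuli, truncation, telescoping, H\"older with geometric weights $2^{-j\varepsilon}$); the only inaccuracy is the role you assign to $\alpha>1/p$ there: the reorganization only needs $0<\varepsilon<\alpha$, i.e.\ $\alpha>0$ together with continuity of $f$, and the hypothesis $\alpha p>1$ is genuinely consumed only in the other direction.

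In that other direction there is a genuine gap, and it sits exactly at the point you flag as the ``main technical obstacle''. Your treatment of the middle term via the rectangles $R_{j,m}$ is correct (the weights cancel and the $R_{j,m}$ are disjoint), but the boundary terms cannot be closed by the Morrey/GRR estimate plus ``bookkeeping with bounded overlap'': the overlap is not bounded. Concretely, the left boundary contribution of level $j$ at $s=m2^{-j}$ is, after averaging, $2^{j(\alpha p-1)}2^{j+1}\int_s^{s+2^{-j-1}}d(f(s),f(u))^p\,\mathrm{d}u$, and inserting $d(f(s),f(u))^p\lesssim (u-s)^{\alpha p-1}\iint_{[s,u]^2}d(f(x),f(y))^p|x-y|^{-\alpha p-1}\,\mathrm{d}x\,\mathrm{d}y$ one finds that a fixed pair $(x,y)$ with $x,y\in[s,s+2^{-j-2}]$ is charged a weight bounded below by a positive constant \emph{independently of $j$}: the factors $2^{j(\alpha p-1)}$, the averaging in $u$, and $(u-s)^{\alpha p-1}\le 2^{-(j+1)(\alpha p-1)}$ cancel exactly, with nothing left over that decays in the level difference. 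A pair with both coordinates in $(0,\epsilon)$, say, is therefore charged at every level $j\lesssim\log_2(1/\epsilon)$, and the scheme only yields $\|f\|_{W^\alpha_p,(1)}^p\lesssim\iint d(f(x),f(y))^p|x-y|^{-\alpha p-1}\,\Lambda(x,y)\,\mathrm{d}x\,\mathrm{d}y$ with a logarithmically unbounded weight $\Lambda$, which is not dominated by $\|f\|_{W^\alpha_p}^p$. To close the argument one needs an additional mechanism that produces a geometric gain $2^{-(i-j)(\alpha p-1)}$ per level difference between the dyadic level $j$ and the scale at which the increment $d(f(s),f(u))$ is resolved (for instance by iterating the comparison-point construction across scales instead of applying GRR once at the top scale); this is precisely where $\alpha p>1$ must be exploited quantitatively, and it is precisely the step you defer to \cite{Liu2020}. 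As it stands, then, the proposal reproduces the easy half and the harmless part of the hard half, but the core of the theorem is not proved; it is outsourced to the same reference the paper cites.
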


The Sobolev regularity leads naturally to the notion of (fractional) Sobolev rough paths. 

\begin{definition}[Sobolev rough path]\label{def:Sobolev geometric rough path}
  Let $\alpha \in (0,1)$ and $p \in (1,+\infty]$ be such that $\alpha > 1/p$. The space $W^{\alpha}_p([0,1];G^{[\frac{1}{\alpha}]}(\mathbb{R}^d))$ consists of all paths $\X\colon[0,1]\to G^{[\frac{1}{\alpha}]}(\mathbb{R}^d) $ such that 
  \begin{equation*}
    \| \X \|_{W^{\alpha}_p} := \Big(\iint_{[0,1]^2}\frac{d_{cc}(\X_s,\X_t)^p}{|t-s|^{\alpha p + 1}} \dd s \dd t\Big)^{1/p} <+\infty.
  \end{equation*}
  The space $W^{\alpha}_p([0,1];G^{[\frac{1}{\alpha}]}(\mathbb{R}^d))$ is called the \textit{weakly geometric Sobolev rough path space} and $\X\in W^{\alpha}_p([0,1];G^{[\frac{1}{\alpha}]}(\mathbb{R}^d))$ is called a \textit{weakly geometric rough path of Sobolev regularity} $(\alpha,p)$ or short \textit{Sobolev rough path}. 
\end{definition}

\begin{remark}
  Assuming that $\alpha \in (0,1)$ and $p \in (1,+\infty]$ with $\alpha > 1/p$, every weakly geometric rough path of Sobolev regularity $(\alpha,p)$ is also H\"older continuous of order $\alpha-1/p$, which can be seen with the help of the Garsia--Rodemich--Rumsey inequality, see e.g.~\cite[Theorem~A.1]{Friz2010}.
  
  Furthermore, the metric structure provided by the Sobolev metric allows to conveniently approximate Sobolev rough path by geodesic interpolations along the dyadic numbers, see \cite[Section~3.3]{Liu2018}.
\end{remark}

In order to obtain the Lipschitz continuity of the solution map associated to differential equations driven by Sobolev rough paths, we need to introduce an inhomogeneous Sobolev distance $\hat{\rho}_{W^\alpha_p}$ on $W^\alpha_p([0,1];G^{[\frac{1}{\alpha}]}(\R^d))$ defined by 
\begin{equation}\label{eq:inhomogeneous Sobolev distance}
  \hat{\rho}_{W^\alpha_p} (\X^1,\X^2):= \sum_{k =1}^{[\frac{1}{\alpha}]} \hat{\rho}^{(k)}_{W^\alpha_p}(\X^1,\X^2),
  \quad \text{for}\quad \X^1,\X^2 \in W^\alpha_p([0,1];G^{[\frac{1}{\alpha}]}(\R^d)),
\end{equation}
where 
\begin{equation*}
  \hat{\rho}^{(k)}_{W^\alpha_p}(\X^1,\X^2) := \Big(\sum_{j \ge 0} 2^{j(\alpha p -1)}\sum_{i=1}^{2^j} |\pi_k(\X^1_{(i-1)2^{-j},i2^{-j}} -   \X^2_{(i-1)2^{-j},i2^{-j}})|^{\frac{p}{k}}\Big)^{\frac{k}{p}}.
\end{equation*}
Note that $\hat{\rho}_{W^\alpha_p}$ is the inhomogeneous counterpart of the discretely defined homogeneous Sobolev norm~\eqref{eq:discrete Sobolev norm}, which is equivalent to the (classical) Sobolev metric as defined in~\eqref{eq:Sobolev norm}, see Theorem~\ref{thm:Sobolev norm equivalent}. Therefore, by using the equivalence of the homogeneous norms on the Carnot group~$G^{[\frac{1}{\alpha}]}(\R^d)$ (see \cite[Theorem~7.44]{Friz2010}) and the triangle inequality, one can verify that $\hat{\rho}_{W^\alpha_p}(\X^1,\X^2) \lesssim \|\mathbf{X}^1\|_{W^\alpha_p,(1)} + \|\mathbf{X}^2\|_{W^\alpha_p,(1)}<+ \infty$ for $\X^1,\X^2$ in $W^\alpha_p([0,1];G^{[\frac{1}{\alpha}]}(\R^d))$, where the proportional constant may depend on $\|\mathbf{X}^1\|_{W^\alpha_p}$ and $\|\mathbf{X}^2\|_{W^\alpha_p}$.

\begin{remark}
  Note that there is also a canonical way to introduce the inhomogeneous Sobolev distance analogously to the integral definition of the homogeneous Sobolev norm~\eqref{eq:Sobolev norm}, which is expected to be equivalent to the discretely defined inhomogeneous Sobolev distance~\eqref{eq:inhomogeneous Sobolev distance}. However, already in the case of homogeneous Sobolev norms, it was a challenging task to show the equivalence of the Sobolev norm via integrals~\eqref{eq:Sobolev norm} and the discretely defined Sobolev norm~\eqref{eq:discrete Sobolev norm}, see \cite{Liu2020}. 
\end{remark}

Moreover, let us recall the inhomogeneous mixed H\"older-variation distance as introduced in~\cite[Section~3.2]{Friz2018}, which is given by 
\begin{equation*}
  \rho_{\tilde V^{\alpha,p}}(\X^1,\X^2):= \max_{k=1,\dots,N}\rho^{(k)}_{\tilde V^{\alpha,p};[0,1]}(\X^1,\X^2),
\end{equation*}
where
\begin{equation*}
  \rho^{(k)}_{\tilde V^{\alpha,p};[s,t]}(\X^1,\X^2):= \sup_{\mathcal{P}\subset [s,t]} \bigg (\sum_{[u,v] \in \mathcal{P}} \frac{\rho^{(k)}_{1/\alpha\var;[u,v]}(\X^1,\X^2)^{\frac{p}{k}}}{|u-v|^{\alpha p-1}} \bigg)^{\frac{k}{p}}, \quad [s,t]\subset [0,1].
\end{equation*}
By \cite[Theorem~2.11 and Corollary~2.12]{Friz2018} and the equivalence of homogeneous norms on the Carnot group~$G^{[\frac{1}{\alpha}]}(\R^d)$, one immediately has that $\rho_{\tilde{V}^\alpha_p}(\X^1,\X^2) \lesssim \|\mathbf{X}^1\|_{W^\alpha_p,(1)} + \|\mathbf{X}^2\|_{W^\alpha_p,(1)}  <+ \infty$ for $\X^1,\X^2$ in $W^\alpha_p([0,1];G^{[\frac{1}{\alpha}]}(\R^d))$, as we established the same bound for $\hat{\rho}_{W^\alpha_p}(\X^1,\X^2)$.\

\medskip
In the following we frequently use the abbreviations: For two real functions $a,b$ depending on variables~$x$ we write $a\lesssim b$ or $a\lesssim_z b$ if there exists a constant $C(z)>0$ such that $a(x) \leq C(z)\cdot b(x)$ for all $x$, and $a\sim b$ if $a\lesssim b$ and $b\lesssim a$ hold simultaneously.

\section{On controlled paths of Sobolev type}\label{sec: Sobolev controlled rough paths}

After having introduced the space of Sobolev rough paths, one wants to ensure that Sobolev rough paths lead to a fully fledged rough path integration and allow to set up a solution theory for rough differential equations. In order to demonstrate the difficulties arising by working with Sobolev rough paths, let us consider first the two level case $W^\alpha_p([0,1];G^2(\R^d))$ for $\alpha \in (1/3,1/2)$ and $p \in (1,+\infty]$ with $\alpha >1/p$. We shall deal with the general case in Section~\ref{sec: Sobolev RDE} and~\ref{sec: Ito map}. In this section we follow the approach using controlled paths as introduced by M. Gubinelli~\cite{Gubinelli2004} and the sewing lemma (see \cite[2.1~ Lemma]{Feyel2006}) to develop rough path integration. See also see textbook~\cite{Friz2014} for this approach.
\medskip

For this purpose we recall the notion of controlled paths possessing $1/\alpha$-variation regularity. Let $\mathcal{L}(\mathbb{R}^n;\mathbb{R}^m)$ be the space of linear operators from $\mathbb{R}^n$ to $\mathbb{R}^m$, $\Delta := \{(s,t) \in [0,1]^2: s<t \}$ and let $B$ be a Banach space with norm $|\cdot|$. For $q\in [1,+\infty)$ and for a continuous function $F\colon \Delta \to B$ we define 
\begin{equation*}  
  \|F\|_{q\var;[s,t]} := \bigg( \sup_{\mathcal{P}\subset[s,t]} \sum_{[u,v] \in \mathcal{P}}|F_{u,v}|^q\bigg)^{1/q},
\end{equation*}
for $[s,t]\subset[0,1]$, and $\|F\|_{q\var}:=\|F\|_{q\var;[0,1]}$. A pair $(Y,Y^\prime)$ is a \textit{controlled path} with respect to a given rough path $\X \in C^{\frac{1}{\alpha}\var}([0,1];G^2(\R^d))$ if 
\begin{itemize}
  \item[(i)] $(Y,Y^\prime)\in C^{\frac{1}{\alpha}\var}([0,1];\mathcal{L}(\R^d,\R^e) \oplus \mathcal{L}(\R^d \otimes \R^d; \R^e))$ and
  \item[(ii)] $R^Y\colon \Delta \to \R^e$, given by $R^Y_{s,t} := Y_{s,t} - Y^\prime_s X_{s,t}$ for $(s,t)\in\Delta$, satisfies $\|R^Y\|_{\frac{1}{2\alpha}\var} < +\infty$.
\end{itemize}
The corresponding space of all such controlled paths with respect to $\X \in C^{\frac{1}{\alpha}\var}([0,1];G^2(\R^d))$ is denoted by $\mathcal{D}_{\X}^{1/\alpha\var}([0,T];\R^e)$.

By the Sobolev-variation embedding theorem \cite[Theorem~2]{Friz2006} the Sobolev rough path space $W^\alpha_p([0,1];G^2(\R^d))$ space can be embedded into the space $C^{\frac{1}{\alpha}\var}([0,1];G^2(\R^d))$. In particular, this implies that for any controlled path $(Y,Y^\prime)\in \mathcal{D}_{\X}^{1/\alpha\var}([0,T];\R^d)$ the standard rough path integral $\int Y \dd \X$ exists, cf. \cite[Theorem~4.9]{Perkowski2016}, \cite[Theorem~1]{Gubinelli2004} and \cite[Theorem~4.10]{Friz2014}, and $\int Y \dd \X$ possesses the same $1/\alpha$-variation as the rough path $\X$. In the next lemma, we make a first observation how these statements transfer into the Sobolev setting.

\begin{lemma}\label{lemma: rough integral of Sobolev rough path lose regularity}
  Let $\X=(X,\mathbb{X})$ be a Sobolev rough path in $W^\alpha_p([0,1];G^2(\R^d))$ for $\alpha \in (1/3,1/2)$ and $p \in (1,+\infty)$ with $\alpha > 1/p$. Let $(Y,Y^\prime)\in \mathcal{D}_{\X}^{1/\alpha\var}([0,T];\R^e)$ be an $\R^e$-valued controlled rough path. Then, the rough path integral $\int Y \dd \X$ exists and belongs to the Sobolev space $W_p^{\alpha^\prime}([0,1];\R^e)$ for every $\alpha^\prime < \alpha$.
\end{lemma}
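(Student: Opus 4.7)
The plan is to first deduce existence of $Z:=\int_0^\cdot Y\dd\X$ from the Sobolev--variation embedding of \cite{Friz2006}, which provides $W^\alpha_p([0,1];G^2(\R^d))\hookrightarrow C^{\frac{1}{\alpha}\var}([0,1];G^2(\R^d))$; combined with $(Y,Y')\in\mathcal{D}_\X^{1/\alpha\var}$, the sewing lemma and Gubinelli's construction yield a continuous path $Z$ satisfying the controlled expansion $Z_{s,t}=Y_s X_{s,t}+Y'_s\mathbb{X}_{s,t}+\Xi_{s,t}$ together with the standard sewing estimate
\begin{equation*}
|\Xi_{s,t}|\lesssim \|X\|_{\frac{1}{\alpha}\var;[s,t]}\|R^Y\|_{\frac{1}{2\alpha}\var;[s,t]}+\|\mathbb{X}\|_{\frac{1}{2\alpha}\var;[s,t]}\|Y'\|_{\frac{1}{\alpha}\var;[s,t]}.
\end{equation*}

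For the Sobolev regularity I would rely on the discrete characterisation $\|\cdot\|_{W^{\alpha'}_p,(1)}$ of Theorem~\ref{thm:Sobolev norm equivalent} and bound $\sum_{j\ge 0}2^{j(\alpha'p-1)}\sum_m|Z_{I^j_m}|^p$, where $I^j_m:=[m2^{-j},(m+1)2^{-j}]$, by splitting $|Z|^p$ into three pieces using the above expansion. The first piece reduces to $\|Y\|_\infty^p\|X\|_{W^{\alpha'}_p,(1)}^p\le \|Y\|_\infty^p\|X\|_{W^\alpha_p,(1)}^p$ by the trivial monotonicity of the discrete Sobolev norm in $\alpha$. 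The second piece combines $|\mathbb{X}_{s,t}|\lesssim d_{cc}(\X_s,\X_t)^2$ with the H\"older embedding $d_{cc}(\X_s,\X_t)\lesssim |t-s|^{\alpha-1/p}$ (Garsia--Rodemich--Rumsey; cf.\ the remark after Definition~\ref{def:Sobolev geometric rough path}); writing $d_{cc}^{2p}=d_{cc}^p\cdot d_{cc}^p$ and bounding the second factor by $|t-s|^{(\alpha-1/p)p}$ extracts an extra factor $2^{-j(\alpha p-1)}$ on each dyadic interval, so the resulting sum over $j$ converges by $\alpha'<2\alpha-1/p$, which is automatic from $\alpha'<\alpha$ and $\alpha>1/p$.

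The main obstacle is the sewing piece $|\Xi|^p$, where the variation-type bound on $\Xi$ does not immediately yield Sobolev integrability. My key tool is the inhomogeneous mixed H\"older--variation norm $\rho_{\tilde V^{\alpha,p}}$ recalled at the end of Section~\ref{sec: Sobolev rough path}, together with its control by the Sobolev norm, $\|\X\|_{\tilde V^{\alpha,p}}\lesssim \|\X\|_{W^\alpha_p,(1)}$. Reading off the defining suprema for $\rho^{(1)}_{\tilde V^{\alpha,p}}$ and $\rho^{(2)}_{\tilde V^{\alpha,p}}$ gives, on every $[u,v]\subset[0,1]$, the length-dependent variation estimates
\begin{equation*}
\|X\|_{\frac{1}{\alpha}\var;[u,v]}^p\le |u-v|^{\alpha p-1}\,\omega^{(1)}_\X(u,v),\qquad \|\mathbb{X}\|_{\frac{1}{2\alpha}\var;[u,v]}^{p/2}\le |u-v|^{\alpha p-1}\,\omega^{(2)}_\X(u,v),
\end{equation*}
where $\omega^{(k)}_\X$ are super-additive two-parameter controls with total mass bounded by $\|\X\|^{p/k}_{\tilde V^{\alpha,p}}\lesssim \|\X\|^{p/k}_{W^\alpha_p,(1)}$. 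Plugging these into the sewing estimate on each $I^j_m$, using the global finiteness of $\|R^Y\|_{\frac{1}{2\alpha}\var}$ and $\|Y'\|_{\frac{1}{\alpha}\var}$ (from $(Y,Y')\in\mathcal{D}_\X^{1/\alpha\var}$), and summing over $m$ via super-additivity of $\omega^{(k)}_\X$, one finds that the $X\cdot R^Y$ contribution to $\sum_m|\Xi_{I^j_m}|^p$ is $\lesssim 2^{-j(\alpha p-1)}$ and the $\mathbb{X}\cdot Y'$ contribution is $\lesssim 2^{-2j(\alpha p-1)}$ (the extra factor arising because $\|\mathbb{X}\|^p=(\|\mathbb{X}\|^{p/2})^2$ squares the length bound). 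Multiplying by $2^{j(\alpha'p-1)}$ and summing in $j$ yields convergent geometric series precisely when $\alpha'<\alpha$ (respectively $\alpha'<2\alpha-1/p$), which is where the arbitrarily small regularity loss enters.
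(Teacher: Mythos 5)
Your proposal is correct and follows essentially the same route as the paper's proof: the controlled/sewing expansion $\int_s^t Y\dd\X = Y_sX_{s,t}+Y_s'\mathbb{X}_{s,t}+\Xi_{s,t}$ with the standard remainder estimate, the discrete characterization $\|\cdot\|_{W^{\alpha'}_p,(1)}$, and per-dyadic-interval bounds of the type $\|X\|_{\frac{1}{\alpha}\var;[u,v]}^p\le |u-v|^{\alpha p-1}\omega(u,v)$ with a superadditive control summed over each level, with the geometric factor $2^{j(\alpha'-\alpha)p}$ giving convergence exactly because $\alpha'<\alpha$. The only cosmetic differences are that you source the length-weighted variation bounds from the $\tilde V^{\alpha,p}$-embedding of Friz--Pr\"omel, whereas the paper uses \cite[Proposition~4.3]{Liu2020} with $\|\X\|_{W^\alpha_p;[u,v]}^p$ itself as the superadditive control, and that you treat the $Y_s'\mathbb{X}_{s,t}$ term via the H\"older (GRR) bound instead of absorbing it into $(\|Y\|_\infty+\|Y'\|_\infty)^p\|\X\|_{W^\alpha_p}^p$.
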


Before proving Lemma~\ref{lemma: rough integral of Sobolev rough path lose regularity}, let us recall the notion of control functions: A function $\omega\colon\Delta\to [0,+\infty)$ is called control function if $\omega(s,s)=0$ for $s\in[0,1]$ and $\omega$ is super-additive.

\begin{proof}
  By \cite[Theorem~4.9]{Perkowski2016}, one has
  \begin{align}\label{eq:Young estimate}
    \begin{split}
    &\Big| \int_s^t Y_r \dd \X_r - Y_s X_{s,t} - Y^\prime_s\mathbb{X}_{s,t}\Big| \\
    &\qquad\lesssim \|R^Y\|_{\frac{1}{2\alpha}\var;[s,t]}\|X\|_{\frac{1}{\alpha}\var;[s,t]}+\|Y^\prime\|_{\frac{1}{\alpha}\var;[s,t]}\|\mathbb{X}\|_{\frac{1}{2\alpha}\var;[s,t]}.
    \end{split}
  \end{align}
  Now we fix an $\alpha^\prime < \alpha$. Thanks to the discrete characterization of Sobolev rough path (Theorem~\ref{thm:Sobolev norm equivalent}, in order to show that $\int Y_r \dd \X_r \in W^{\alpha^\prime}_p([0,1];\R^e)$ it suffices to prove that
  \begin{equation*}
    \Big\|\int Y_r \dd \X_r\Big\|_{W^{\alpha^\prime}_p,(1)}^p := \sum_{j=0}^\infty \sum_{i=1}^{2^j} 2^{j(\alpha^\prime p - 1)}\Big|\int_{\frac{i-1}{2^j}}^{\frac{i}{2^j}} Y_r \dd \X_r\Big|^p < +\infty.
  \end{equation*}
  Indeed, applying \eqref{eq:Young estimate}, we get 
  \begin{align*}
    \Big\|\int Y_r \dd \X_r\Big\|_{W^{\alpha^\prime}_p,(1)}^p
    \lesssim &\sum_{j=0}^\infty \sum_{i=1}^{2^j} 2^{j(\alpha^\prime p - 1)}\Big|\int_{\frac{i-1}{2^j}}^{\frac{i}{2^j}} Y_r \dd \X_r - Y_{\frac{i-1}{2^j}}X_{\frac{i-1}{2^j},\frac{i}{2^j}} - Y^\prime_{\frac{i-1}{2^j}}\mathbb{X}_{\frac{i-1}{2^j},\frac{i}{2^j}} \Big|^p \\
    &+ \sum_{j=0}^\infty \sum_{i=1}^{2^j} 2^{j(\alpha^\prime p - 1)}\Big|Y_{\frac{i-1}{2^j}}X_{\frac{i-1}{2^j},\frac{i}{2^j}} + Y^\prime_{\frac{i-1}{2^j}}\mathbb{X}_{\frac{i-1}{2^j},\frac{i}{2^j}}\Big|^p  \\
    \lesssim &\sum_{j=0}^\infty \sum_{i=1}^{2^j} 2^{j(\alpha^\prime p - 1)}\|R^Y\|_{\frac{1}{2\alpha}\var;[\frac{i-1}{2^j},\frac{i}{2^j}]}^p\|X\|_{\frac{1}{\alpha}\var;[\frac{i-1}{2^j},\frac{i}{2^j}]}^p \\
    &+ \sum_{j=0}^\infty \sum_{i=1}^{2^j} 2^{j(\alpha^\prime p - 1)}\|Y^\prime\|_{\frac{1}{\alpha}\var;[\frac{i-1}{2^j},\frac{i}{2^j}]}^{p}\|\mathbb{X}\|_{\frac{1}{2\alpha}\var;[\frac{i-1}{2^j},\frac{i}{2^j}]}^p\\
    &+ \sum_{j=0}^\infty \sum_{i=1}^{2^j} 2^{j(\alpha^\prime p - 1)}\Big|Y_{\frac{i-1}{2^j}}X_{\frac{i-1}{2^j},\frac{i}{2^j}} + Y^\prime_{\frac{i-1}{2^j}}\mathbb{X}_{\frac{i-1}{2^j},\frac{i}{2^j}}\Big|^p.
  \end{align*}
  Now we estimate separately each of the terms of the above sum.
  
  For the last term, since $\X \in W^\alpha_p([0,1];G^2(\R^d))$, we immediately have 
  \begin{equation*}
    \sum_{j=0}^\infty \sum_{i=1}^{2^j} 2^{j(\alpha^\prime p - 1)}\Big|Y_{\frac{i-1}{2^j}}X_{\frac{i-1}{2^j},\frac{i}{2^j}} + Y^\prime_{\frac{i-1}{2^j}}\mathbb{X}_{\frac{i-1}{2^j},\frac{i}{2^j}}\Big|^p  \lesssim \big(\|Y\|_{\infty} + \|Y^\prime\|_{\infty}\big)^p\|\X\|_{W^\alpha_p}^p < +\infty.
  \end{equation*}
  
  For the second term, by \cite[Proposition~4.3]{Liu2020} we observe that 
  \begin{equation*}
    \|\X\|_{\frac{1}{\alpha}\var;[s,t]}^{\frac{1}{\alpha}} \lesssim \|\X\|_{W^\alpha_p;[s,t]}^{\frac{1}{\alpha}}|t-s|^{1-\frac{1}{\alpha p}}
  \end{equation*}
  for all $s<t$, which implies that 
  \begin{equation*}
    \|\mathbb{X}\|_{\frac{1}{2\alpha}\var;[\frac{i-1}{2^j},\frac{i}{2^j}]}^p \lesssim \|\X\|_{\frac{1}{\alpha}\var;[\frac{i-1}{2^j},\frac{i}{2^j}]} ^{2p} \lesssim \|\X\|_{W^\alpha_p; [\frac{i-1}{2^j},\frac{i}{2^j}]}^{2p} (2^{-j})^{2(\alpha p -1)},
  \end{equation*}
  and consequently that 
  \begin{equation*}
    \|Y^\prime\|_{\frac{1}{\alpha}\var;[\frac{i-1}{2^j},\frac{i}{2^j}]}^{p}\|\mathbb{X}\|_{\frac{1}{2\alpha}\var;[\frac{i-1}{2^j},\frac{i}{2^j}]}^p \lesssim \|Y^\prime\|_{\frac{1}{\alpha}\var;[0,1]}^{p}\|\X\|_{W^\alpha_p; [\frac{i-1}{2^j},\frac{i}{2^j}]}^{2p} (2^{-j})^{2(\alpha p -1)}.
  \end{equation*}
  Since $\|\X\|_{W^\alpha_p; [\cdot,\cdot]}^{2p}$ is a control function, it follows that
  \begin{equation*}
    \sum_{j=0}^\infty \sum_{i=1}^{2^j} 2^{j(\alpha^\prime p - 1)}\|Y^\prime\|_{\frac{1}{\alpha}\var;[\frac{i-1}{2^j},\frac{i}{2^j}]}^{p}\|\mathbb{X}\|_{\frac{1}{2\alpha}\var;[\frac{i-1}{2^j},\frac{i}{2^j}]}^p 
    \lesssim \sum_{j=0}^\infty 2^{-j(\alpha p -1)} \|Y^\prime\|_{\frac{1}{\alpha}\var;[0,1]}^{p}\|\X\|_{W^\alpha_p; [0,1]}^{2p};
  \end{equation*}
  and since $\alpha p - 1 >0$, the sum on the right hand side converges.
  
  For the third term the same reasoning leads to
  \begin{align}\label{eq:estimate remainder}
    &\sum_{j=0}^\infty \sum_{i=1}^{2^j} 2^{j(\alpha^\prime p - 1)}\|R^Y\|_{\frac{1}{2\alpha}\var;[\frac{i-1}{2^j},\frac{i}{2^j}]}^p\|X\|_{\frac{1}{\alpha}\var;[\frac{i-1}{2^j},\frac{i}{2^j}]}^p\\
    &\qquad\lesssim \sum_{j=0}^\infty 2^{j(\alpha^\prime -\alpha)p } \|R^Y\|_{\frac{1}{2\alpha}\var;[0,1]}^p\|\X\|_{W^\alpha_p; [0,1]}^{p}.\nonumber
  \end{align}
  Thanks to the assumption that $\alpha^\prime < \alpha$, the sum on the right hand side of the above inequality converges. Hence, the proof is completed.
\end{proof}

From Lemma~\ref{lemma: rough integral of Sobolev rough path lose regularity} we see that, without adapting the regularity of the controlled path $(Y,Y^\prime)$, one can only guarantee that $\int Y \dd \X$ belongs to the Sobolev space $W^{\alpha^\prime}_p([0,1];\R^e)$ for every $\alpha^\prime < \alpha$. In words, the rough path integral has less regularity than the rough path $\X$. This observation motivates us to introduce a Sobolev topology also on the space of controlled paths. \medskip

Looking again at the third term~\eqref{eq:estimate remainder} in the proof of Lemma~\ref{lemma: rough integral of Sobolev rough path lose regularity}, we notice that to ensure that $\int Y \dd \X$ belongs to $W^\alpha_p([0,1];\R^e)$ separately, one has to find conditions on $R^Y$ such that the series
\begin{equation*}
  \sum_{j=0}^\infty \sum_{i=1}^{2^j} 2^{j(\alpha p - 1)}\|R^Y\|_{\frac{1}{2\alpha}\var;[\frac{i-1}{2^j},\frac{i}{2^j}]}^p\|X\|_{\frac{1}{\alpha}\var;[\frac{i-1}{2^j},\frac{i}{2^j}]}^p
\end{equation*}
converges. Applying the estimates 
\begin{equation*}
  \|X\|_{\frac{1}{\alpha}\var;[\frac{i-1}{2^j},\frac{i}{2^j}]}^p \lesssim \|\X\|_{W^\alpha_p; [\frac{i-1}{2^j},\frac{i}{2^j}]}^{p}2^{-j(\alpha p -1)}
\end{equation*}
to the above series, we essentially need the following condition:
$$
  \sum_{j=0}^\infty \sum_{i=1}^{2^j} \|R^Y\|_{\frac{1}{2\alpha}\var;[\frac{i-1}{2^j},\frac{i}{2^j}]}^p\|\X\|_{W^\alpha_p; [\frac{i-1}{2^j},\frac{i}{2^j}]}^{p} < +\infty.
$$
More explicitly, we need that $\|R^Y\|_{\frac{1}{2\alpha}\var;[\frac{i-1}{2^j},\frac{i}{2^j}]}^p$ can be compared to $2^{-j \beta}$ for some $\beta > 0$ uniformly over all $i=1,\ldots, 2^j$ and $j \ge 1$. This consideration naturally leads us to invoke the so-called mixed H\"older-variation space introduced in \cite{Friz2018}: we shall require that $R^Y$ satisfies that
\begin{equation}\label{eq: mixed Hoelder-variation condition for two variables functions}
  \sup_{\mathcal{P}} \sum_{[u,v] \in \mathcal{P}}\frac{\Big\|R^Y\Big\|_{\frac{1}{2\alpha}\var;[u,v]}^{\frac{p}{2}}}{|u - v|^{\alpha p - 1}} < + \infty.
\end{equation}
Once this is the case, then it follows immediately that 
\begin{equation*}
  \|R^Y\|_{\frac{1}{2\alpha}\var;[\frac{i-1}{2^j},\frac{i}{2^j}]}^p \le C 2^{-2j(\alpha p -1)}
\end{equation*}
for all $i$ and $j$ with $C$ denoting the supremum in \eqref{eq: mixed Hoelder-variation condition for two variables functions}; and then as $\alpha p - 1 > 0$ it holds that
\begin{align*}
  \sum_{j=0}^\infty \sum_{i=1}^{2^j} \|R^Y\|_{\frac{1}{2\alpha}\var;[\frac{i-1}{2^j},\frac{i}{2^j}]}^p\|\X\|_{W^\alpha_p; [\frac{i-1}{2^j},\frac{i}{2^j}]}^{p} &\lesssim \sum_{j=0}^\infty 2^{-2j(\alpha p -1)} \sum_{i=1}^{2^j}\|\X\|_{W^\alpha_p; [\frac{i-1}{2^j},\frac{i}{2^j}]}^{p}\\
  &\le \Big(\sum_{j=0}^\infty 2^{-2j(\alpha p -1)}\Big)\|\X\|_{W^\alpha_p; [0,1]}^{p} < +\infty,
\end{align*}
as wished.

Inspired by the above observations, we introduce the following function space: Let $(B, \|\cdot \|)$ be a Banach space. For $\beta \in (0,1)$ and $q \ge 1$ we use $\tilde{V}^\beta_q(\Delta; B)$ to denote the space of all continuous functions $f \in C(\Delta;B)$ such that
$$
  \sup_{\mathcal{P}} \sum_{[u,v] \in \mathcal{P}}\frac{\|f\|_{\frac{1}{\beta}\var;[u,v]}^{q}}{|u - v|^{\beta q - 1}} < +\infty.
$$
Moreover, for $[s,t]\subset [0,1]$ we define 
$$
  \|f\|_{\tilde{V}^\beta_q;[s,t]} := \bigg(\sup_{\mathcal{P}|_{[s,t]}} \sum_{[u,v] \in \mathcal{P}}\frac{\|f\|_{1/\beta\var;[u,v]}^{q}}{|u - v|^{\beta q - 1}}\bigg)^{\frac{1}{q}}
$$
and $\|f\|_{\tilde{V}^\beta_q}:=\|f\|_{\tilde{V}^\beta_q;[0,1]}$. Let us remark that, if the remainder term $R^Y$ attached to a controlled rough path $(Y,Y^\prime)$ satisfies additionally that $R^Y \in \tilde{V}^{2\alpha}_{\frac{p}{2}}(\Delta; E)$, then the rough integral $\int Y\dd \X$ is an element in $W^\alpha_p([0,1];\R^e)$, by the previous discussion.\medskip

Furthermore, if we want to apply the Banach fixed point theorem to obtain existence and uniqueness results for rough differential equations driven by Sobolev signals~$\X$ within the Sobolev framework, the Sobolev regularity of controlled paths is necessary, i.e., $(Y,Y^\prime)$ should be an element in $W^\alpha_p([0,1];\mathcal{L}(\R^d, \R^e)) \times W^\alpha_p([0,1]; \mathcal{L}(\R^d \otimes \R^d, \R^e))$. In particular, since $Y_{s,t} =  Y^\prime_s X_{s,t}+R^Y_{s,t}$, from the discrete characterization of Sobolev norms~\eqref{eq:discrete Sobolev norm} we see that in this case $R^Y$ satisfies 
$$
  \sum_{j=0}^\infty \sum_{i=1}^{2^j} 2^{j(\alpha p -1)}\Big|R^Y_{\frac{i-1}{2^j},\frac{i}{2^j}}\Big|^p < +\infty.
$$
Hence, let us denote by $\hat{W}^\beta_q(\Delta;\R^n)$ the space of all continuous functions $f \in C(\Delta; \R^n)$ such that
$$
  \|f\|_{\hat{W}^\beta_q} := \Big(\sum_{j=0}^\infty \sum_{i=1}^{2^j} 2^{j(\beta q -1)}\Big|f_{\frac{i-1}{2^j},\frac{i}{2^j}}\Big|^q\Big)^{\frac{1}{q}}<+\infty. 
$$
Hence, in the Sobolev setting the natural definition of controlled paths goes as follows.

\begin{definition}\label{def: Sobolev controlled rough paths}
  Let $\X$ be an element in $W^\alpha_p([0,1];G^2(\R^d))$. A pair $(Y,Y^\prime)$ is called an \textit{controlled path of Sobolev type} $(\alpha,p)$ if $Y \in W^\alpha_p([0,1];\R^n)$, $Y^\prime \in W^\alpha_p([0,1];\mathcal{L}(\R^d;\R^n))$ and $R^Y_{s,t} := Y_{s,t} - Y^\prime_sX_{s,t}$ satisfies that $R^Y \in \tilde{V}^{2\alpha}_{\frac{p}{2}}(\Delta;\R^n) \cap \hat{W}^{2\alpha}_{\frac{p}{2}}(\Delta; \R^n)$. The space of all such controlled rough paths is denoted by $\mathcal{D}^{\alpha,p}_{\X}([0,1];\R^n)$, which is equipped with the norm
  \begin{equation*}
    \|(Y,Y^\prime)\|_{\mathcal{D}^{\alpha,p}_{\X}} := \|Y^\prime\|_{W^\alpha_p} + \|R^Y\|_{\tilde{V}^{2\alpha}_{\frac{p}{2}}} + \|R^Y\|_{\hat{W}^{2\alpha}_{\frac{p}{2}}} + |Y_0| + |Y^\prime_0|.
  \end{equation*}
\end{definition}

\begin{remark}
  From the definition of $\tilde{V}^{2\alpha}_{\frac{p}{2}}(\Delta;\R^n)$ we can immediately see that if $R^Y \in \tilde{V}^{2\alpha}_{\frac{p}{2}}(\Delta;\R^n)$, then it also has finite $1/2\alpha$-variation. Hence, applying Sobolev-variation embedding results (see \cite[Theorem~2]{Friz2006}) to $(Y,Y^\prime)$, it follows that every controlled path of Sobolev type $(\alpha,p)$ is a controlled path with finite $1/\alpha$-variation. Moreover, using the discrete characterization of Sobolev norms, we can also see that $\|Y\|_{W^\alpha_p}$ can be estimated by $\|R^Y\|_{\hat{W}^{2\alpha}_{\frac{p}{2}}} +\|Y^\prime\|_{W^\alpha_p} + |Y^\prime_0| + \|X\|_{W^\alpha_p}$. Finally, we remark that $(\mathcal{D}^{\alpha,p}_{\X}([0,1];\R^n), \| \cdot\|_{\mathcal{D}^{\alpha,p}_{\X}})$ is a Banach space.
\end{remark}

With the notion of Sobolev rough paths and controlled paths of Sobolev type, one can recover many stability properties known for controlled paths with finite $q$-variations (e.g. under rough path integration, compositions of smooth functions, ...) also for controlled paths of Sobolev type. Let us just mention some of them here.

\begin{lemma}\label{lemma: stability result of rough integrals in Sobolev setting}  
  Let $\X$ be a Sobolev rough path in $W^\alpha_p([0,1];G^2(\R^d))$, $(Y,Y^\prime) \in \mathcal{D}^{\alpha,p}_{\X}([0,1];\R^n)$ be an controlled path of Sobolev type. Let $I_{\X}(Y) := \int Y\dd\X$ be the rough path integral obtained as in Lemma~\ref{lemma: rough integral of Sobolev rough path lose regularity}. Then, one has:
  \begin{enumerate}
	\item[(i)] $(I_{\X}(Y), Y)$ belongs to $\mathcal{D}^{\alpha,p}_{\X}([0,1];\R^n)$.
	\item[(ii)] If $\tilde{\X}$ is another rough path in $W^\alpha_p([0,1];G^2(\R^d))$ and $(\tilde{Y},\tilde{Y}^\prime) \in \mathcal{D}^{\alpha,p}_{\tilde{\X}}([0,1];\R^n)$, then we have the following locally uniform estimates
	\begin{align*}
	  &\| R^{I_{\X}(Y)} - R^{I_{\tilde{\X}}(\tilde{Y})}\|_{\tilde{V}^{2\alpha}_{\frac{p}{2}}}+ \|R^{I_{\X}(Y)} - R^{I_{\tilde{\X}}(\tilde{Y})}\|_{\hat{W}^{2\alpha}_{\frac{p}{2}}}\\
	  &\quad\lesssim \|R^Y - R^{\tilde{Y}}\|_{\tilde{V}^{2\alpha}_{\frac{p}{2}}} + \|R^Y - R^{\tilde{Y}}\|_{\hat{W}^{2\alpha}_{\frac{p}{2}}}  
	  \quad + \|Y^\prime - \tilde{Y}^\prime\|_{W^\alpha_p} + \rho_{\tilde{V}^\alpha_p}(\X, \tilde{\X}) + \hat{\rho}_{W^\alpha_p}(\X, \tilde{\X}),
	\end{align*}
	where $R^{I_{\X}(Y)}$ and $R^{I_{\tilde{\X}}(\tilde{Y})}$ are the remainder terms of $(I_{\X}(Y), Y)$ and $(I_{\tilde{\X}}(\tilde{Y}), \tilde{Y})$, respectively.
  \end{enumerate}
\end{lemma}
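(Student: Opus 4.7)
Both parts reduce to the quantitative sewing construction of $I_\X(Y)$ combined with the Sobolev--variation embedding of \cite{Friz2006} and the assumption $\alpha p > 1$. Throughout, as in the proof of Lemma~\ref{lemma: rough integral of Sobolev rough path lose regularity}, I would write
\begin{equation*}
  R^{I_\X(Y)}_{s,t} = I_\X(Y)_{s,t} - Y_s X_{s,t} = Y^\prime_s \mathbb{X}_{s,t} + \mathcal{E}_{s,t},
\end{equation*}
where $\mathcal{E}_{s,t}$ denotes the sewing error, bounded pointwise by the right-hand side of~\eqref{eq:Young estimate}, and recall that the variation norms there can be traded for $W^\alpha_p$-norms times a positive power of the length via \cite[Proposition~4.3]{Liu2020}.

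\emph{Part (i).} For the $\hat W^{2\alpha}_{p/2}$-bound on $R^{I_\X(Y)}$, I would evaluate the decomposition at the dyadic pairs $(s,t) = ((i-1)/2^j, i/2^j)$, raise absolute values to the $p/2$-th power, weight by $2^{j(\alpha p - 1)}$, and sum in $i$ and $j$. Replacing $\|X\|_{\frac{1}{\alpha}\var;[s,t]}$ and $\|\mathbb{X}\|_{\frac{1}{2\alpha}\var;[s,t]}$ via Sobolev--variation embedding produces an extra factor $|t-s|^{\alpha - 1/p}$ per unit of variation; since two such factors appear, the outer series in $j$ is geometric with exponent $-(\alpha p - 1)<0$, exactly as in the convergence at the end of the proof of Lemma~\ref{lemma: rough integral of Sobolev rough path lose regularity}. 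The estimate of $\|R^{I_\X(Y)}\|_{\tilde V^{2\alpha}_{p/2}}$ follows the same pattern: for any subinterval $[u,v]$ I would split $\|R^{I_\X(Y)}\|_{\frac{1}{2\alpha}\var;[u,v]}^{p/2}$ into the $Y^\prime\mathbb{X}$-contribution and the sewing-error contribution, each of which is controlled by a product of super-additive control functions, one of which supplies the required factor $|v-u|^{\alpha p - 1}$.

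\emph{Part (ii).} The stability argument is structurally identical, applied to the bilinear decomposition
\begin{equation*}
  R^{I_\X(Y)}_{s,t} - R^{I_{\tilde \X}(\tilde Y)}_{s,t} = (Y^\prime_s - \tilde Y^\prime_s)\mathbb{X}_{s,t} + \tilde Y^\prime_s(\mathbb{X}_{s,t} - \tilde{\mathbb{X}}_{s,t}) + (\mathcal{E}_{s,t} - \tilde{\mathcal{E}}_{s,t}).
\end{equation*}
The stability form of the sewing lemma bounds $|\mathcal{E}_{s,t} - \tilde{\mathcal{E}}_{s,t}|$ linearly in $\|R^Y - R^{\tilde Y}\|_{\frac{1}{2\alpha}\var;[s,t]}$, $\|Y^\prime - \tilde Y^\prime\|_{\frac{1}{\alpha}\var;[s,t]}$, $\rho^{(1)}_{\frac{1}{\alpha}\var;[s,t]}(\X,\tilde\X)$, and $\rho^{(2)}_{\frac{1}{2\alpha}\var;[s,t]}(\X,\tilde\X)$, with the complementary factors bounded by the a priori input norms. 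Running the dyadic summation of part (i) on each of the three pieces, the $\mathbb{X}$-type difference factors repackage by design into $\hat\rho_{W^\alpha_p}(\X,\tilde\X)$ (through the explicit $\tilde Y^\prime_s(\mathbb{X}-\tilde{\mathbb{X}})_{s,t}$ term) and $\rho_{\tilde V^\alpha_p}(\X,\tilde\X)$ (through the variation-type contributions in $\mathcal{E}-\tilde{\mathcal{E}}$), while the controlled-path differences contribute the first three terms on the right-hand side of the claim.

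\emph{Main obstacle.} The real bookkeeping lies in part (ii): each bilinear piece generates one ``distance'' factor multiplied by several ``bounded'' factors, and the split between $\frac{1}{\alpha}\var$ and $\frac{1}{2\alpha}\var$ exponents in the sewing bound must be arranged so that, after passing to the $p/2$-th power and summing dyadically, the distance factor lands in exactly one of the five admissible quantities on the right-hand side, while the bounded factors absorb into the locally uniform constant. As in (i), positivity of $\alpha p - 1$ is what makes each of the underlying geometric series in $j$ converge.
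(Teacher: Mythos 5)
Your proposal is correct and follows essentially the same route as the paper: decomposing $R^{I_\X(Y)}$ into the $Y'_s\mathbb{X}_{s,t}$ part plus a sewing error, bounding the difference of the sewing errors via the stability form of the sewing lemma (the paper does this through $\delta h^Y-\delta h^{\tilde Y}$), trading local variation norms for $W^\alpha_p$-norms times length powers, and summing dyadically using super-additivity and $\alpha p-1>0$. The only differences are presentational, so no further comment is needed.
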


\begin{proof}
  (i) We have already shown that with $(Y,Y^\prime) \in \mathcal{D}^{\alpha,p}_{\X}([0,1];\R^n)$, the rough path integral $I_{\X}(Y)$ is well-defined and belongs to $W^\alpha_p([0,1];\R^n)$. Hence, to show the item (i), it only remains to check that the remainder term $R^{I_{\X}(Y)} := \int_s^t Y \dd \X - Y_s X_{s,t}$ belongs to $\tilde{V}^{2\alpha}_{\frac{p}{2}}(\Delta;\R^n) \cap \hat W^{2\alpha}_{\frac{p}{2}}(\Delta;\R^n)$. 
  By \cite[Theorem~4.9]{Perkowski2016}, we note again that
  \begin{align*}
    \Big| \int_s^t Y \d \X - Y_sX_{s,t} \Big| \lesssim |Y^\prime_s\mathbb{X}_{s,t}| + \|R^Y\|_{\frac{1}{2\alpha}\var;[s,t]}\|X\|_{\frac{1}{\alpha}\var;[s,t]} + \|Y^\prime\|_{\frac{1}{\alpha}\var;[s,t]}\|\mathbb{X}\|_{\frac{1}{2\alpha}\var;[s,t]}.
  \end{align*}
  Since $d_{cc}(\X_s,\X_t) \sim |X_{s,t}| + |\mathbb{X}_{s,t}|^{1/2}$, for each $u<v$ in $[0,1]$ we have
  \begin{equation*}
    \|\mathbb{X}\|_{\frac{1}{2\alpha}\var;[u,v]}^{\frac{p}{2}} \lesssim \|\X\|_{\frac{1}{\alpha}\var;[u,v]}^p \lesssim \|\X\|_{W^\alpha_p;[u,v]}^p|u-v|^{\alpha p -1},
  \end{equation*}
  where the last inequality follows again from \cite[Proposition~4.3]{Liu2020}. Then, as $\|\X\|_{W^\alpha_p;[u,v]}^p$ is superadditive in $[u,v]$, we can deduce that $\|\mathbb{X}\|_{\tilde{V}^{2\alpha}_{\frac{p}{2}}} \lesssim \|\X\|_{W^\alpha_p}$. This estimates guarantees that $R^1_{s,t} := |Y^\prime_s\mathbb{X}_{s,t}|$ and $R^3_{s,t}:= \|Y^\prime\|_{\frac{1}{\alpha}\var;[s,t]}\|\mathbb{X}\|_{\frac{1}{2\alpha}\var;[s,t]}$ belong to $\tilde{V}^{2\alpha}_{\frac{p}{2}}(\Delta;\R^n)$. Finally, since we have assumed that $(Y,Y^\prime) \in \mathcal{D}^{\alpha,p}_{\X}([0,1];\R^n)$, it holds that $R^Y \in \tilde{V}^{2\alpha}_{\frac{p}{2}}(\Delta;\R^n)$ by definition and so is $R^2_{s,t} := \|R^Y\|_{\frac{1}{2\alpha}\var;[s,t]}\|X\|_{\frac{1}{\alpha}\var;[s,t]}$. As a consequence, we can conclude that $R^{I_{\X}(Y)} := \int_s^t Y \dd \X - Y_sX_{s,t}$ belongs to $\tilde{V}^{2\alpha}_{\frac{p}{2}}(\Delta;\R^n)$. The fact that $R^{I_{\X}(Y)} \in W^{2\alpha}_{\frac{p}{2}}(\Delta;\R^n)$ can be easily established by following the proof of the item (ii) below.
  
  (ii) Now we bound the term $\|R^{I_{\X}(Y)} - R^{I_{\tilde{\X}}(\tilde{Y})}\|_{\hat{W}^{2\alpha}_{\frac{p}{2}}}$. In the first step above we have seen that $R^{I_{\X}(Y)}_{s,t} = Y^\prime_s\mathbb{X}_{s,t} + h^Y_{s,t}$ with the residue function $h^Y_{s,t}$ having finite $1/3\alpha$ variation. Similarly $R^{I_{\tilde{\X}}(\tilde{Y})}_{s,t} = \tilde{Y}^\prime_s\tilde{\mathbb{X}}_{s,t} + h^{\tilde{Y}}_{s,t}$ for some $h^{\tilde{Y}}$ of finite $1/3\alpha$ variation. Moreover, from the classical sewing lemma (cf. \cite{Friz2014}) we also know that
  \begin{equation*}
    \delta h^Y_{s,u,t}:=h^Y_{s,t} - h^Y_{s,u} - h^Y_{u,t} = -R^Y_{s,u}X_{u,t} - Y^\prime_{s,u}\mathbb{X}_{u,t},
  \end{equation*}
  and the similar relation holds for $\delta h^{\tilde{Y}}_{s,u,t}$ for $s<u<t$. Then, since $3\alpha>1$, the sewing lemma applied to the difference $\delta h^Y_{s,u,t} - \delta h^{\tilde{Y}}_{s,u,t}$ leads to the bound
  \begin{align*}
    |h^Y_{s,t} - h^{\tilde{Y}}_{s,t}| &\lesssim \|R^Y - R^{\tilde{Y}}\|_{\frac{1}{2\alpha}\var;[s,t]}\|X\|_{\frac{1}{\alpha}\var;[s,t]} + \|R^{\tilde{Y}}\|_{\frac{1}{2\alpha}\var;[s,t]}|X_{s,t} - \tilde{X} _{s,t}| \\
    &\qquad+ \|\tilde{Y}^\prime\|_{\frac{1}{\alpha}\var;[s,t]}\|\mathbb{X} - \tilde{\mathbb{X}}\|_{\frac{1}{2\alpha}\var;[s,t]} + \|Y^\prime - \tilde{Y}^\prime\|_{\frac{1}{\alpha}\var;[s,t]}\|\mathbb{X}\|_{\frac{1}{2\alpha}\var;[s,t]}.
  \end{align*}
  Now, inserting $s = \frac{i-1}{2^j}$ and $t=\frac{i}{2^j}$, we can follow the same lines as in the proof of Lemma~\ref{lemma: rough integral of Sobolev rough path lose regularity} to deduce that
  \begin{align*}
    &\sum_{j=0}^\infty 2^{j(\alpha p -1)}\sum_{i=1}^{2^j} \|R^Y - R^{\tilde{Y}}\|_{\frac{1}{2\alpha}\var;[\frac{i-1}{2^j},\frac{i}{2^j}]}^{\frac{p}{2}}\|X\|_{\frac{1}{\alpha}\var; [\frac{i-1}{2^j},\frac{i}{2^j}]}^{\frac{p}{2}} \\
    &\qquad\le\sum_{j=0}^\infty 2^{j(\alpha p -1)} \sum_{i=1}^{2^j}\|\X\|_{W^\alpha_p; [\frac{i-1}{2^j},\frac{i}{2^j}]}^{\frac{p}{2}}2^{-j\frac{\alpha p -1}{2}}\|R^Y - R^{\tilde{Y}}\|_{\tilde{V}^{2\alpha}_{\frac{p}{2}};[\frac{i-1}{2^j},\frac{i}{2^j}]}^{\frac{p}{2}}2^{-j(\alpha p -1)}\\
    &\qquad\lesssim \sum_{j=0}^\infty 2^{-j\frac{\alpha p -1}{2}}\|\X\|_{W^\alpha_p; [0,1]}^{\frac{p}{2}}\sum_{i=1}^{2^j}\|R^Y - R^{\tilde{Y}}\|_{\tilde{V}^{2\alpha}_{\frac{p}{2}};[\frac{i-1}{2^j},\frac{i}{2^j}]}^{\frac{p}{2}} \\
    &\qquad\lesssim \|\X\|_{W^\alpha_p; [0,1]}^{\frac{p}{2}}\|R^Y - R^{\tilde{Y}}\|_{\tilde{V}^{2\alpha}_{\frac{p}{2}};[0,1]}^{\frac{p}{2}}.
  \end{align*}
  Thus, for $F^1_{s,t} := \|R^Y - R^{\tilde{Y}}\|_{\frac{1}{2\alpha}\var;[s,t]}\|X\|_{\frac{1}{\alpha}\var;[s,t]}$, we obtain that
  $$
    \|F^1\|_{\hat{W}^{2\alpha}_{\frac{p}{2}}} \lesssim  \|\X\|_{W^\alpha_p; [0,1]}\|R^Y - R^{\tilde{Y}}\|_{\tilde{V}^{2\alpha}_{\frac{p}{2}};[0,1]}.
  $$
  Applying the same reasoning to $F^2_{s,t}:= \|R^{\tilde{Y}}\|_{\frac{1}{2\alpha}\var;[s,t]}|X_{s,t} - \tilde{X}_{s,t}|$, $F^3_{s,t}:= \|\tilde{Y}^\prime\|_{\frac{1}{\alpha}\var;[s,t]}\|\mathbb{X} - \tilde{\mathbb{X}}\|_{\frac{1}{2\alpha}\var;[s,t]}$ and $F^4_{s,t}:= \|Y^\prime - \tilde{Y}^\prime\|_{\frac{1}{\alpha}\var;[s,t]}\|\mathbb{X}\|_{\frac{1}{2\alpha}\var;[s,t]}$ and noting that $|h^Y_{s,t} - h^{\tilde{Y}}_{s,t}| \lesssim \sum_{i=1}^4 F^i_{s,t}$, we can conclude that
  $$
    \|h^Y - h^{\tilde{Y}}\|_{\hat{W}^{2\alpha}_{\frac{p}{2}}} \lesssim \|R^Y - R^{\tilde{Y}}\|_{\hat{W}^{2\alpha}_{\frac{p}{2}}}  
    + \hat{\rho}_{W^\alpha_p}(\X, \tilde{\X}),
  $$
  which in turn implies that $\|R^{I_{\X}(Y)}-  R^{I_{\tilde{\X}}(\tilde{Y})}\|_{\hat{W}^{2\alpha}_{\frac{p}{2}}} \lesssim \|R^Y - R^{\tilde{Y}}\|_{\tilde{V}^{2\alpha}_{\frac{p}{2}}}  + \|Y^\prime - \tilde{Y}^\prime\|_{W^\alpha_p} + \hat{\rho}_{W^\alpha_p}(\X, \tilde{\X})$. A similar calculation also provides a similar bound for $\|R^{I_{\X}(Y)}-  R^{I_{\tilde{\X}}(\tilde{Y})}\|_{\tilde{V}^{2\alpha}_{\frac{p}{2}}}$, which completes the proof of (ii).
\end{proof}

\begin{remark}
  The proof of the Lemma~\ref{lemma: stability result of rough integrals in Sobolev setting} illustrates the reason why we choose the discrete Sobolev norm $\|\cdot\|_{\hat{W}^{2\alpha}_{\frac{p}{2}}}$ instead of $\|\cdot\|_{\hat{W}^{\alpha}_{p}}$ in Definition~\ref{def: Sobolev controlled rough paths} because in general one only has
  $$
    \|R^{I_{\X}(Y)}-  R^{I_{\tilde{\X}}(\tilde{Y})}\|_{\hat{W}^{\alpha}_{p}} \lesssim \|R^Y - R^{\tilde{Y}}\|_{\tilde{V}^{2\alpha}_{\frac{p}{2}}}^{\frac{1} {2}} + \|Y^\prime - \tilde{Y}^\prime\|_{W^\alpha_p}
    + \hat{\rho}_{W^\alpha_p}(\X, \tilde{\X})^{\frac{1}{2}},
  $$
  so that we do not have a (local) Lipschitz estimates. 
  
  The same regularity condition for the second order term $\mathbb{X}$ appears in the framework of paracontrolled distributions when working with Sobolev spaces, see \cite[Definition~5.1]{Promel2016}. 
\end{remark}

\begin{remark}
  Recall that the rough path integration coincides with the classical Young integration if $\alpha > 1/2$. For the Young integral is well-known that the integration operator is continuous with to the Sobolev distance, see, e.g., \cite{Kamont1994} and \cite{Zahle1998,Zahle2001}. This in line with Lemma~\ref{lemma: stability result of rough integrals in Sobolev setting}: In the case $\alpha > 1/2$ the second order term $\mathbb{X}$ does not appear, therefore, the Sobolev distance $\hat{\rho}_{W^\alpha_p}$ can be equivalently defined in its integral form, which dominates the distance $\rho_{\tilde{V}^\alpha_p}$, see \cite[Corollary~2.12]{Friz2018}. However, for the rough path distances we (currently) cannot avoid the use of $\rho_{\tilde{V}^\alpha_p}$, see also Remark~\ref{rmk:inhomogeneous rough path distances} below.
\end{remark}

Controlled paths of Sobolev type are also stable under compositions of smooth functions. For $n\in \N$ let $C^n_b(\R^e;\mathcal{L}(\R^d;\R^e))$ be the space of $n$-times continuously differentiable functions $f\colon \R^e\to \mathcal{L}(\R^d;\R^e)$ such that $f$ and its derivatives of up to order $n$ are bounded.

\begin{lemma}\label{lemma: stability results for compositions of smooth functions of Sobolev controlled rough paths}
  Let $F \in C^3_b(\R^e;\mathcal{L}(\R^d;\R^e))$ and $(Y,Y^\prime) \in \mathcal{D}^{\alpha,p}_{\X}([0,1];\R^e)$. Then, one has:
  \begin{enumerate}
	\item[(i)] $(F(Y),F(Y)^\prime):= (F(Y), DF(Y)Y^\prime) \in \mathcal{D}^{\alpha,p}_{\X}([0,1];\mathcal{L}(\R^d,\R^e))$.
	\item[(ii)] If $\tilde{\X}$ is another rough path in $W^\alpha_p([0,1];G^2(\R^d))$ and $(\tilde{Y},\tilde{Y}^\prime) \in \mathcal{D}^{\alpha,p}_{\tilde{\X}}([0,T];\R^e)$, then we have the following locally uniform estimates
	\begin{align*}
      &\|R^{F(Y)} - R^{F(\tilde{Y})}\|_{\tilde{V}^{2\alpha}_{\frac{p}{2}}}+ \|R^{F(Y)} - R^{F(\tilde{Y})}\|_{\hat{W}^{2\alpha}_{\frac{p}{2}}} \\
	  &\qquad\lesssim \|R^Y - R^{\tilde{Y}}\|_{\tilde{V}^{2\alpha}_{\frac{p}{2}}} + \|R^Y - R^{\tilde{Y}}\|_{\hat{W}^{2\alpha}_{\frac{p}{2}}}  
	  + \|Y^\prime - \tilde{Y}^\prime\|_{W^\alpha_p} + \rho_{\tilde{V}^\alpha_p}(\X, \tilde{\X}) + \hat{\rho}_{W^\alpha_p}(\X, \tilde{\X})
	\end{align*}
	where $R^{F(Y)}$ and $R^{F(\tilde{Y})}$ are the remainder terms of $(F(Y),F(Y)^\prime)$ and $(F(\tilde{Y}),F(\tilde{Y})^\prime)$, respectively.
  \end{enumerate}
\end{lemma}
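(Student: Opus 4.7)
The plan is to follow the structure of the proof of Lemma~\ref{lemma: stability result of rough integrals in Sobolev setting}, but replace the rough-integration identity by a second-order Taylor expansion. Writing
\begin{equation*}
  F(Y_t) - F(Y_s) = DF(Y_s)\, Y_{s,t} + G_{s,t}, \qquad G_{s,t} := \int_0^1 (1-\theta)\, D^2F(Y_s + \theta Y_{s,t})(Y_{s,t}, Y_{s,t})\dd \theta,
\end{equation*}
and substituting $Y_{s,t} = Y^\prime_s X_{s,t} + R^Y_{s,t}$ yields the key identity $R^{F(Y)}_{s,t} = DF(Y_s) R^Y_{s,t} + G_{s,t}$, with $|G_{s,t}| \lesssim \|D^2F\|_\infty |Y_{s,t}|^2$. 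This reduces both statements to controlling $DF(Y_\cdot) R^Y$ and $G$ in $\tilde V^{2\alpha}_{p/2} \cap \hat W^{2\alpha}_{p/2}$, together with their Lipschitz dependence.

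For (i), the first step is to check that $F(Y)^\prime := DF(Y) Y^\prime \in W^\alpha_p$ via the discrete Leibniz estimate $\|fg\|_{W^\alpha_p,(1)} \lesssim \|f\|_\infty \|g\|_{W^\alpha_p,(1)} + \|f\|_{W^\alpha_p,(1)} \|g\|_\infty$ combined with the pointwise bound $|DF(Y_t) - DF(Y_s)| \le \|D^2F\|_\infty |Y_{s,t}|$, which gives $\|DF(Y)\|_{W^\alpha_p} \lesssim \|Y\|_{W^\alpha_p}$. In the decomposition of $R^{F(Y)}$, the summand $DF(Y_s) R^Y_{s,t}$ inherits the $\tilde V^{2\alpha}_{p/2} \cap \hat W^{2\alpha}_{p/2}$ regularity of $R^Y$ (boundedness of $DF$ suffices for the $\hat W$-norm, Lipschitzness for the $1/(2\alpha)$-variation inside the $\tilde V$-norm). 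The delicate piece is $G$: from $|Y_{s,t}|^2 \lesssim \|Y^\prime\|_\infty^2 |X_{s,t}|^2 + |R^Y_{s,t}|^2$, the $|X|^p$ contribution to the dyadic $\hat W^{2\alpha}_{p/2}$-sum equals $\|X\|_{W^\alpha_p,(1)}^p$, while the $|R^Y|^p$ contribution is estimated via $|R^Y_{s,t}|^{p/2} \le \|R^Y\|_{\tilde V^{2\alpha}_{p/2};[s,t]}^{p/2}\, |t-s|^{\alpha p -1}$ (obtained by testing the $\tilde V^{2\alpha}_{p/2}$-norm on the singleton partition), combined with super-additivity of $\|R^Y\|_{\tilde V^{2\alpha}_{p/2};\cdot}^{p/2}$ and summability of $\sum_j 2^{-j(\alpha p -1)}$. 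For the $\tilde V^{2\alpha}_{p/2}$-norm of $G$, I would use $\|G\|_{1/(2\alpha)\var;[u,v]} \lesssim \|Y\|_{1/\alpha\var;[u,v]}^2$ together with the Sobolev--variation embedding $W^\alpha_p \hookrightarrow \tilde V^\alpha_p$ of Friz--Seeger, obtaining $\|G\|_{\tilde V^{2\alpha}_{p/2}}^{p/2} \lesssim \|Y\|_{\tilde V^\alpha_p}^p < \infty$.

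For (ii), the linearization
\begin{equation*}
  R^{F(Y)}_{s,t} - R^{F(\tilde Y)}_{s,t} = \bigl(DF(Y_s) - DF(\tilde Y_s)\bigr) R^{\tilde Y}_{s,t} + DF(Y_s)\bigl(R^Y_{s,t} - R^{\tilde Y}_{s,t}\bigr) + \bigl(G_{s,t} - \tilde G_{s,t}\bigr)
\end{equation*}
combined with $|DF(Y_s) - DF(\tilde Y_s)| \lesssim |Y_s - \tilde Y_s|$ and the $C^3_b$-based bound
\begin{equation*}
  |G_{s,t} - \tilde G_{s,t}| \lesssim \bigl(|Y_s - \tilde Y_s| + |Y_{s,t} - \tilde Y_{s,t}|\bigr)\bigl(|Y_{s,t}|^2 + |\tilde Y_{s,t}|^2\bigr) + \bigl(|Y_{s,t}| + |\tilde Y_{s,t}|\bigr)|Y_{s,t} - \tilde Y_{s,t}|
\end{equation*}
reduces the problem to quantities already estimated in (i). Splitting $Y_{s,t} - \tilde Y_{s,t} = (Y^\prime_s - \tilde Y^\prime_s) X_{s,t} + \tilde Y^\prime_s (X_{s,t} - \tilde X_{s,t}) + (R^Y_{s,t} - R^{\tilde Y}_{s,t})$ and reapplying the dyadic-summation scheme produces the Lipschitz estimate, with the $\rho_{\tilde V^\alpha_p}(\X,\tilde\X)$ contribution arising from $X - \tilde X$ and $\mathbb X - \tilde{\mathbb X}$ inside $\tilde V^{2\alpha}_{p/2}$-type sums and the $\hat\rho_{W^\alpha_p}(\X,\tilde\X)$ contribution from their $\hat W^{2\alpha}_{p/2}$-type counterparts.

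The main obstacle I anticipate is the $\hat W^{2\alpha}_{p/2}$-bound on $G$: since $|G_{s,t}|^{p/2}$ scales like $|Y_{s,t}|^p$, the $\hat W^{2\alpha}_{p/2}$-regularity of $R^Y$ (which only controls $|R^Y_{s,t}|^{p/2}$) does not apply directly. The extra half-power has to be gained from the $\tilde V^{2\alpha}_{p/2} \to$ pointwise estimate, which leaves a spare factor $|t-s|^{\alpha p -1}$ that is absorbed by the geometric series $\sum_j 2^{-j(\alpha p -1)}$, converging precisely because $\alpha p > 1$. Once this trade-off is carried out once, the same calculation applied to the differences furnishes the locally Lipschitz bounds in (ii).
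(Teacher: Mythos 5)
Your proposal is correct and follows essentially the route the paper intends: the paper's proof is a one-line reference to adapting the arguments of Lemma~\ref{lemma: stability result of rough integrals in Sobolev setting}, and your Taylor expansion $R^{F(Y)}_{s,t}=DF(Y_s)R^Y_{s,t}+G_{s,t}$ combined with the same dyadic summation, the bound $\|\cdot\|_{1/\beta\var;[s,t]}\lesssim\|\cdot\|_{\tilde V^{\beta}_{q};[s,t]}|t-s|^{(\beta q-1)/q}$, super-additivity of the localized norms and summability of $\sum_j 2^{-j(\alpha p-1)}$ is exactly that adaptation, carried out in more detail than the paper itself. Only cosmetic quibbles: boundedness of $DF$ already suffices for the $\tilde V^{2\alpha}_{p/2}$-bound on $DF(Y)R^Y$ (the two-parameter variation is built from $|DF(Y_u)R^Y_{u,v}|$, not from increments of $DF(Y)$), and the embedding $W^\alpha_p\hookrightarrow \tilde V^{\alpha}_p$ you invoke is the one the paper takes from Friz--Victoir/Friz--Pr\"omel rather than Friz--Seeger.
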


\begin{proof}
  The proof follows by very similar arguments as in the proof of Lemma~\ref{lemma: stability result of rough integrals in Sobolev setting}, which can be adapted to the present setting without further difficulties.
\end{proof}

The stability results (Lemma~\ref{lemma: stability result of rough integrals in Sobolev setting} and~\ref{lemma: stability results for compositions of smooth functions of Sobolev controlled rough paths}) allow to apply a Banach fixed point argument to show that differential equations driven by Sobolev rough paths along smooth enough vector fields admit a unique solution of the same Sobolev regularity as the driving signals. Moreover, the solution depends continuously on the driving signals in a locally Lipschitz manner. We summarize these facts in the next theorem:

\begin{theorem}\label{thm: Lipschitz continuity of Ito map for 2 levels RDE}
  Suppose $\X$ is a rough path in $W^\alpha_p([0,1];G^2(\R^d))$ and $V \in  C^3_b(\R^e; \mathcal{L}(\R^d;\R^e))$. Then, the rough differential equation
  \begin{equation*}
    Y_t = y_0 + \int_0^t V(Y_s) \dd \X_s,\quad t\in[0,1],
  \end{equation*}
  admits a unique solution $Y \in W^\alpha_p([0,1];\R^e)$. Furthermore, If $\tilde{\X}$ is another rough path in $W^\alpha_p([0,1];G^2(\R^d))$ and $\tilde{Y}$ is the solution to the differential equation driven by $\tilde{\X}$ along $V$ with initial value $y_0$, then it holds that
  \begin{equation*}
    \|Y - \tilde{Y}\|_{W^\alpha_p} \lesssim \rho_{\tilde{V}^\alpha_p}(\X, \tilde{\X}) + \hat{\rho}_{W^\alpha_p}(\X, \tilde{\X}),
  \end{equation*}
  where the proportionality constant only depends on $p$, $\alpha$, $\X$, $\tilde{\X}$ and $V$.
\end{theorem}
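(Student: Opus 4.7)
The plan is to set up the standard Banach fixed-point argument for RDEs, but now carried out inside the Banach space of controlled paths of Sobolev type $\mathcal{D}^{\alpha,p}_{\X}([0,1];\R^e)$ introduced in Definition~\ref{def: Sobolev controlled rough paths}. Define the map
\begin{equation*}
  \Phi\colon (Y,Y^\prime) \longmapsto \Big(y_0 + \int_0^\cdot V(Y_s)\dd \X_s,\, V(Y)\Big),
\end{equation*}
on a closed ball of $\mathcal{D}^{\alpha,p}_{\X}$. The fact that $\Phi$ lands in $\mathcal{D}^{\alpha,p}_{\X}$ is exactly a combination of Lemma~\ref{lemma: stability results for compositions of smooth functions of Sobolev controlled rough paths}(i) (so that $(V(Y),DV(Y)Y^\prime) \in \mathcal{D}^{\alpha,p}_{\X}$) and Lemma~\ref{lemma: stability result of rough integrals in Sobolev setting}(i) (so that integrating against $\X$ produces a new controlled path of Sobolev type with Gubinelli derivative $V(Y)$).

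To turn this into a contraction, I would first establish that on a sufficiently short interval $[0,T^*]$, the relevant constants appearing in the stability estimates of Lemma~\ref{lemma: stability result of rough integrals in Sobolev setting}(ii) and Lemma~\ref{lemma: stability results for compositions of smooth functions of Sobolev controlled rough paths}(ii) can be made arbitrarily small. This is the point where the Sobolev setting requires some care: unlike the $p$-variation norm, the Sobolev seminorm is not a genuine control in the sense of superadditive control functions, so I cannot simply use $\|\X\|_{W^\alpha_p;[0,T^*]}\to 0$ as $T^*\to 0$ in a pointwise way. Instead, I would exploit that the $\tilde{V}^{2\alpha}_{p/2}$ and $\hat W^{2\alpha}_{p/2}$ seminorms, together with $\|\X\|_{W^\alpha_p}$, do enjoy the superadditivity-style bounds established inside the proof of Lemma~\ref{lemma: stability result of rough integrals in Sobolev setting}, which allow me to absorb products of small quantities. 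Applying the stability estimates to $(Y_1,Y_1^\prime)$ and $(Y_2,Y_2^\prime)$ driven by the \emph{same} rough path $\X$ (so that $\rho_{\tilde V^\alpha_p}(\X,\X)$ and $\hat\rho_{W^\alpha_p}(\X,\X)$ vanish), I obtain
\begin{equation*}
  \|\Phi(Y_1,Y_1^\prime)-\Phi(Y_2,Y_2^\prime)\|_{\mathcal{D}^{\alpha,p}_{\X};[0,T^*]} \le C(T^*)\,\|(Y_1,Y_1^\prime)-(Y_2,Y_2^\prime)\|_{\mathcal{D}^{\alpha,p}_{\X};[0,T^*]},
\end{equation*}
with $C(T^*)<1$ for $T^*$ small. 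Banach's fixed-point theorem then yields a unique solution on $[0,T^*]$, and since $T^*$ depends only on $\|V\|_{C^3_b}$ and $\|\X\|_{W^\alpha_p}$, a standard concatenation argument extends the solution to $[0,1]$; the Sobolev norm of the concatenated solution is controlled because only finitely many intervals are needed.

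For the local Lipschitz dependence on the driver, I would compare the two solutions $Y$ and $\tilde Y$ (with their canonical Gubinelli derivatives $V(Y)$ and $V(\tilde Y)$) by combining Lemma~\ref{lemma: stability result of rough integrals in Sobolev setting}(ii) with Lemma~\ref{lemma: stability results for compositions of smooth functions of Sobolev controlled rough paths}(ii). This yields an estimate of the form
\begin{equation*}
  \Delta \lesssim \rho_{\tilde V^\alpha_p}(\X,\tilde\X) + \hat\rho_{W^\alpha_p}(\X,\tilde\X) + C(T^*)\,\Delta,
\end{equation*}
for $\Delta := \|(Y,V(Y))-(\tilde Y,V(\tilde Y))\|_{\mathcal{D}^{\alpha,p}_{\X},\mathcal{D}^{\alpha,p}_{\tilde\X};[0,T^*]}$ on a short interval, where $C(T^*)<1$ can be arranged as before. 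Absorbing the $C(T^*)\Delta$ term, iterating the estimate over the finitely many sub-intervals covering $[0,1]$, and finally using the embedding noted in the remark after Definition~\ref{def: Sobolev controlled rough paths} (namely $\|Y\|_{W^\alpha_p} \lesssim \|R^Y\|_{\hat W^{2\alpha}_{p/2}} + \|Y^\prime\|_{W^\alpha_p} + |Y^\prime_0| + \|X\|_{W^\alpha_p}$) converts this controlled-path bound into the desired bound for $\|Y-\tilde Y\|_{W^\alpha_p}$.

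I expect the main obstacle to be the contraction step: one must carefully track how $\|\X\|_{W^\alpha_p;[0,T^*]}$, $\|R^Y\|_{\tilde V^{2\alpha}_{p/2};[0,T^*]}$, and $\|R^Y\|_{\hat W^{2\alpha}_{p/2};[0,T^*]}$ all shrink as $T^*\to 0$, given that the discrete Sobolev norm $\hat W^{2\alpha}_{p/2}$ is defined via the dyadic partition of the full interval rather than locally, and hence does not behave well under restriction. The resolution, already implicit in the proofs of Lemmas~\ref{lemma: stability result of rough integrals in Sobolev setting} and~\ref{lemma: stability results for compositions of smooth functions of Sobolev controlled rough paths}, is that the contributions that need to be small appear as products with a positive power of $\|\X\|_{W^\alpha_p;[s,t]}$ arising from the estimate $\|X\|_{1/\alpha\var;[s,t]}^p \lesssim \|\X\|_{W^\alpha_p;[s,t]}^p |t-s|^{\alpha p -1}$ (via \cite[Proposition~4.3]{Liu2020}), together with the superadditivity of $\|\X\|_{W^\alpha_p;[\cdot,\cdot]}^p$; this provides just enough room to close the contraction on a short interval.
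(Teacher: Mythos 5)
Your plan is essentially identical to the paper's own proof: the paper also defines the solution map $\Phi^V((Y,Y'))=(y_0+\int V(Y)\,\mathrm{d}\X,\,V(Y))$ on $\mathcal{D}^{\alpha,p}_{\X}$, obtains a local contraction from Lemma~\ref{lemma: stability result of rough integrals in Sobolev setting} and Lemma~\ref{lemma: stability results for compositions of smooth functions of Sobolev controlled rough paths}, pastes local solutions together, and reads off the Lipschitz bound from the remainder estimates, referring to \cite[Chapter~8]{Friz2014} with the H\"older metric replaced by $\rho_{\tilde V^\alpha_p}+\hat\rho_{W^\alpha_p}$. Your additional discussion of why the contraction closes despite the discrete Sobolev seminorms not being controls (via $\|X\|_{1/\alpha\var;[s,t]}^p\lesssim\|\X\|_{W^\alpha_p;[s,t]}^p|t-s|^{\alpha p-1}$ and superadditivity of $\|\X\|_{W^\alpha_p;[\cdot,\cdot]}^p$) is exactly the mechanism already used inside those lemmas, so no gap.
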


As Theorem~\ref{thm: Lipschitz continuity of Ito map for 2 levels RDE} can also be derived as a special case of Theorem~\ref{thm:Ito-Lyons map continuity}, we only outline here the main steps of the proof. However, in the present level-$2$ setting it is more transparent to see why $\rho_{\tilde{V}^\alpha_p} + \hat{\rho}_{W^\alpha_p}$ appear in our stability estimates, in particular, in the local Lipschitz continuity of the map associated to differential equations driven by Sobolev rough paths.

\begin{proof}
  Let $\Phi^V$ be the solution mapping defined on $\mathcal{D}^{\alpha,p}_{\X}([0,1];\R^e)$ into itself, which is given by
  $$
    \Phi^V((Y,Y^\prime)) := \bigg(y_0 + \int V(Y) \dd \X, V(Y)\bigg).
  $$
  By Lemma~\ref{lemma: stability result of rough integrals in Sobolev setting} and \ref{lemma: stability results for compositions of smooth functions of Sobolev controlled rough paths} it is straightforward to check that $\Phi^V$ is a local contraction, and therefore the rough differential equation admits a unique local solution. Then a routine argument in theory of differential equations allows us to paste local solutions together to get a unique global solution. The estimate of $\|Y - \tilde{Y}\|_{W^\alpha_p}$ follows then from the corresponding estimates of the remainder terms in Lemma~\ref{lemma: stability result of rough integrals in Sobolev setting} and \ref{lemma: stability results for compositions of smooth functions of Sobolev controlled rough paths}. We note that every estimates contains the term $\rho_{\tilde{V}^\alpha_p}(\X, \tilde{\X}) + \hat{\rho}_{W^\alpha_p}(\X, \tilde{\X})$. For more details we refer the reader to \cite[Chapter~8]{Friz2014}. Although the setup therein is the H\"older case, one can copy all proofs verbatim to the current Sobolev setting by replacing the inhomogenous H\"older metric through the mixed type metric $\rho_{\tilde{V}^\alpha_p} + \hat{\rho}_{W^\alpha_p}$.
\end{proof}

\begin{remark}
  In the case $\alpha \in (1/3, 1/2)$ the continuity of the It\^o--Lyons map was established in~\cite{Promel2016} also in a Sobolev setting based on the notion of paracontrolled distributions but not on classical rough path spaces. The paracontrolled distribution approach avoids the use of the sewing lemma but does not directly extend to less regular driving signals.
\end{remark}

\section{Rough differential equations driven by Sobolev rough paths}\label{sec: Sobolev RDE}

We consider the controlled differential equation
\begin{equation}\label{eq:controlled diff equa}
  \d Y_t = V(Y_t)\dd X_t, \quad Y_0 = y_0, \quad t \in [0,1],
\end{equation}
for a driving signal $X \in C^{r\var}([0,1];\R^d)$, an initial value $y_0 \in \R^e$ and a vector field $V = (V_1,\dots,V_d)\colon \R^e \rightarrow \L(\R^d;\R^e)$. Let $\Lip^\alpha:=\Lip^\alpha ( \mathbb{R}^e;\mathcal{L}(\mathbb{R}^d;\mathbb{R}^e))$ be the space of all $\alpha$-Lipschitz continuous functions $V\colon \mathbb{R}^e \to \mathcal{L}(\mathbb{R}^d,\mathbb{R}^e)$ in the sense of E. Stein for $\alpha >0$, equipped with the usual norm $| \cdot |_{\Lip^\alpha}$, see \cite[Definition~10.2]{Friz2010}. 

As discussed in the Introduction, if $r>2$, it is not sufficient to take ``only" a $\R^d$-valued path~$X$ as input to the system~\eqref{eq:controlled diff equa} in order to develop a pathwise solution theory. Therefore, we require in the following the driving signal to be a rough path~$\X$. For a given weakly geometric rough path $\X \in C^{r\var}([0,1]; G^{[r]}(\R^d))$,  $Y \in C([0,1];\R^e)$ is said to be a solution to the \textit{rough differential equation}
\begin{equation}\label{eq:RDE}
  \d Y_t = V(Y_t)\dd \X_t, \quad Y_0 = y_0, \quad t \in [0,1],
\end{equation}
if there exist a sequence $(X^n) \subset C^{1\var}([0,1];\R^d)$ such that 
$$
  \lim_{n \rightarrow \infty}\sup_{0\le s \le t \le T}d_{cc}(S_{[r]}(X^n)_{s,t}, \X_{s,t}) = 0, \quad \sup_{n}\|S_{[r]}(X^n)\|_{r\var} < +\infty,
$$
and the corresponding solutions $Y^n$ to equation~\eqref{eq:controlled diff equa} converge uniformly on $[0,T]$ to~$Y$ as $n \rightarrow \infty$, cf. \cite[Definition~10.17]{Friz2010}. By \cite[Theorem~10.14 and Corollary~10.15]{Friz2010}, given a rough path $\X \in C^{r\var}([0,1]; G^{[r]}(\R^d))$ and a vector field $V \in \Lip^{\gamma - 1}$ with $\gamma > r \ge 1$, there exists a solution~$Y$ to the equation~\eqref{eq:RDE} such that for any $[s,t]\subset [0,T]$, 
\begin{equation}\label{eq:Euler approx}
  |Y_t -Y_s - \mathcal{E}_V(Y_s,\X_{s,t})| \lesssim (|V|_{\Lip^{\gamma -1}}\|\X\|_{r\var;[s,t]})^\gamma,
\end{equation}
where $\mathcal{E}_V(Y_s,\X_{s,t})$ denotes the step-$[r]$ Euler scheme (cf. \cite[Definition~10.1]{Friz2010}), namely,
\begin{equation}\label{eq:expression of Euler scheme}
  \mathcal{E}_V(Y_s,\X_{s,t}) := \sum_{k = 1}^{[r]}\sum_{i_1,\dots,i_k \in \{1,\dots,d\}}V_{i_1}\dots V_{i_k}I(Y_s)\pi_k(\X_{s,t})^{i_1,\dots,i_k},
\end{equation}
where $I$ is the identity map on $\R^e$ and $\pi_k(\X_{s,t})^{i_1,\dots,i_k}$ denotes the $(i_1,\dots,i_k)$-component of $\pi_k(\X_{s,t}) \in (\R^{d})^{\otimes k}$. 

Instead of using the classical notation of weakly geometric rough paths of finite $r$-variation, we shall consider the driving signal~$\X$ of the controlled differential equation~\eqref{eq:RDE} to be a Sobolev rough path in $W^{\alpha}_p([0,1];G^{[\frac{1}{\alpha}]}(\R^d))$ with $\alpha \in (0,1)$ and $p \in (1,+\infty]$ such that $\alpha > \frac{1}{p}$, cf. Definition~\ref{def:Sobolev geometric rough path}. From Sobolev embedding theorems, see e.g. \cite[Theorem~2]{Friz2006}, we know that $\X$ still belongs to $C^{r\var}([0,1];G^{[r]}(\R^d))$ with $r := \frac{1}{\alpha}$. Hence, if the vector field~$V$ in \eqref{eq:RDE} belongs to $\Lip^{\gamma - 1}$ with $\gamma > r \ge 1$, then by classical results from rough path theory, as stated above, there exists a solution $Y \in C^{r\var}([0,1];\R^e)$ to the rough differential equation~\eqref{eq:RDE}. The following proposition shows that in this case we even obtain the solution~$Y$ to be of Sobolev regularity. Namely, $Y$ has exactly the same Sobolev regularity as the driving signal~$\X$. 

\begin{proposition}\label{prop:Young-Love inequality RDE setting}
  Let $\alpha \in (0,1)$ and $p \in (1,+\infty]$ be such that $\alpha > 1/p$. Suppose that $\X \in W^{\alpha}_p([0,1];G^{[\frac{1}{\alpha}]}(\R^d))$ and $V \in \Lip^{\gamma - 1}$ for some $\gamma > 1/\alpha $. Then, for any initial condition $y_0 \in \R^e$ there exists a solution~$Y$ to the rough differential equation~\eqref{eq:RDE} with $Y_0 = y_0$. Moreover, there exists a continuous increasing function $f\colon \R_+ \to \R_+$ such that for all $\X \in W^{\alpha}_p([0,1];G^{[\frac{1}{\alpha}]}(\R^d))$ with $\sup_{t \in [0,1]}\|\X_t\|_{cc} \leq M$, one has
  \begin{equation*}
    \|Y\|_{W^{\alpha}_p} \lesssim f(M)\Big(|V|_{\Lip^{\gamma - 1}}\|\X\|_{W^{\alpha}_p} + (|V|_{\Lip^{\gamma - 1}}\|\X\|_{W^{\alpha}_p})^\gamma\Big).
  \end{equation*}
\end{proposition}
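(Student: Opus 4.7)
The plan is to derive existence from standard rough path theory via the Sobolev-variation embedding, and then to extract the Sobolev estimate by inserting the step-$N$ Euler approximation into the discrete characterisation of the Sobolev norm supplied by Theorem~\ref{thm:Sobolev norm equivalent}. Setting $N:=[1/\alpha]$, every $\X \in W^\alpha_p([0,1];G^N(\R^d))$ lives in $C^{1/\alpha\var}([0,1];G^N(\R^d))$ by \cite[Theorem~2]{Friz2006}, so the assumption $V \in \Lip^{\gamma-1}$ with $\gamma > 1/\alpha$ puts us in the regime of \cite[Theorem~10.14 and Corollary~10.15]{Friz2010}. This immediately yields a solution $Y$ and the step-$N$ Euler bound~\eqref{eq:Euler approx} on arbitrary subintervals of $[0,1]$; the proportional constant, after a standard localisation/concatenation argument, is controlled by a continuous function of $M := \sup_t \|\X_t\|_{cc}$.

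For the Sobolev estimate, I would work with the equivalent discrete norm
$$
  \|Y\|_{W^\alpha_p,(1)}^p \;\sim\; \sum_{j\geq 0} 2^{j(\alpha p -1)}\sum_{m=0}^{2^j-1}\bigl|Y_{m2^{-j},(m+1)2^{-j}}\bigr|^p.
$$
Combining~\eqref{eq:Euler approx}, the Euler expansion~\eqref{eq:expression of Euler scheme}, the bound $|V_{i_1}\cdots V_{i_k}I|\lesssim |V|_{\Lip^{\gamma-1}}^k$, and the equivalence of homogeneous norms on $G^N(\R^d)$ (which gives $|\pi_k(\X_{s,t})|\lesssim d_{cc}(\X_s,\X_t)^k$ for $k=1,\dots,N$), the problem reduces to bounding, for each $k\in\{1,\dots,N\}$,
$$
  \Sigma_k := \sum_{j\geq 0} 2^{j(\alpha p -1)}\sum_m d_{cc}\bigl(\X_{m2^{-j}},\X_{(m+1)2^{-j}}\bigr)^{kp},
$$
together with an analogous sum for the Euler remainder carrying the exponent $\gamma p$ in place of $kp$.

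For the first level, $\Sigma_1 \lesssim \|\X\|_{W^\alpha_p,(1)}^p \sim \|\X\|_{W^\alpha_p}^p$ is immediate from Theorem~\ref{thm:Sobolev norm equivalent}. For $k\geq 2$, and for the remainder term, I would invoke the Sobolev-to-variation estimate from \cite[Proposition~4.3]{Liu2020}, $\|\X\|_{1/\alpha\var;[s,t]} \lesssim \|\X\|_{W^\alpha_p;[s,t]}|t-s|^{\alpha - 1/p}$, which upgrades to $d_{cc}(\X_s,\X_t)^{kp} \lesssim \|\X\|_{W^\alpha_p;[s,t]}^{kp}|t-s|^{k(\alpha p -1)}$. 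Plugging this into $\Sigma_k$, the length factor on dyadic intervals cancels the weight $2^{j(\alpha p -1)}$ and leaves the residual $2^{-j(\alpha p -1)(k-1)}$. Using super-additivity of the control function $[s,t]\mapsto \|\X\|_{W^\alpha_p;[s,t]}^p$ and $\|\X\|_{W^\alpha_p;[s,t]}\leq \|\X\|_{W^\alpha_p}$ to factor out $\|\X\|_{W^\alpha_p}^{(k-1)p}$ in the inner sum, one arrives at $\Sigma_k \lesssim \|\X\|_{W^\alpha_p}^{kp}\sum_j 2^{-j(\alpha p -1)(k-1)}$; the geometric series converges since $\alpha p >1$ and $k\geq 2$. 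The same computation with $\gamma$ replacing $k$ handles the Euler-remainder contribution, so that after taking $p$-th roots the bound reads
$$
  \|Y\|_{W^\alpha_p} \lesssim_M \sum_{k=1}^N \bigl(|V|_{\Lip^{\gamma-1}}\|\X\|_{W^\alpha_p}\bigr)^k + \bigl(|V|_{\Lip^{\gamma-1}}\|\X\|_{W^\alpha_p}\bigr)^\gamma,
$$
which collapses to the stated form by means of the elementary inequality $x^k \leq x + x^\gamma$ valid for $1\leq k\leq \gamma$ and $x\geq 0$ (note $N < 1/\alpha < \gamma$).

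The main obstacle is the control of the intermediate levels $k=2,\dots,N$ and of the Euler remainder: a direct Sobolev bound on $d_{cc}(\X_s,\X_t)^k$ is unavailable, and passing through $1/\alpha$-variation costs an additional factor $|t-s|^{\alpha p -1}$ per dyadic interval. The argument works precisely because this length factor exactly cancels one copy of the dyadic weight $2^{j(\alpha p -1)}$ and yields summable decay when iterated $k-1$ (respectively $\gamma-1$) times; this is where the hypothesis $\alpha>1/p$ is used in a quantitatively essential way, beyond mere finiteness of the Sobolev norm. A subsidiary technical point is the need to localise on subintervals on which $\|\X\|_{1/\alpha\var;[s,t]}$ is small enough for~\eqref{eq:Euler approx} to apply in clean form, which is what produces the prefactor $f(M)$.
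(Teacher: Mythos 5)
Your argument is correct and follows the same skeleton as the paper's proof: existence and the Euler estimate~\eqref{eq:Euler approx} via the Sobolev--variation embedding, then insertion into the discrete norm of Theorem~\ref{thm:Sobolev norm equivalent}, with the remainder term handled through the localized estimate $\|\X\|_{\frac{1}{\alpha}\var;[s,t]}\lesssim\|\X\|_{W^\alpha_p;[s,t]}|t-s|^{\alpha-1/p}$, super-additivity of $\|\X\|_{W^\alpha_p;[\cdot,\cdot]}^p$, and a convergent geometric series (the paper uses $\sum_i|a_i|^\gamma\le(\sum_i|a_i|)^\gamma$ where you factor out one power of the full norm; both work since $\gamma>1$). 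The genuine difference lies in the Euler-scheme term: the paper bounds it in one shot, $|\mathcal{E}_V(Y_s,\X_{s,t})|\lesssim|V|_{\Lip^{\gamma-1}}|\X_{s,t}|\lesssim\max(1,M^{[\frac{1}{\alpha}]})\,\rho(\X_s,\X_t)$ via \cite[(7.22)]{Friz2010}, applies Theorem~\ref{thm:Sobolev norm equivalent} with the Euclidean metric $\rho$ and then $\rho\lesssim d_{cc}$ on bounded sets --- this is precisely where $f(M)$ enters --- whereas you expand level by level, use the global equivalence of homogeneous norms to get $|\pi_k(\X_{s,t})|\lesssim d_{cc}(\X_s,\X_t)^k$, treat $k=1$ directly by Theorem~\ref{thm:Sobolev norm equivalent} and $k\ge 2$ like the remainder term, gaining the decay $2^{-j(\alpha p-1)(k-1)}$. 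Your route buys a constant that is in fact independent of $M$ (the homogeneous-norm equivalence on the Carnot group is global), so the factor $f(M)$ is not really needed in your version; correspondingly, your attribution of $f(M)$ to a localisation needed for~\eqref{eq:Euler approx} is misplaced --- the Euler estimate of \cite[Theorem~10.14, Corollary~10.15]{Friz2010} holds on every subinterval without a smallness condition, exactly as the paper uses it, so that subsidiary step can simply be dropped. The final collapse via $x^k\le x+x^\gamma$ for $1\le k\le[\frac{1}{\alpha}]<\gamma$ is fine and is a slightly more careful bookkeeping of the powers of $|V|_{\Lip^{\gamma-1}}$ than in the paper.
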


\begin{proof}[Proof of Proposition~\ref{prop:Young-Love inequality RDE setting}]
  Since we have that $\X \in C^{\frac{1}{\alpha}\var}([0,1]; G^{[\frac{1}{\alpha}]}(\R^d))$, see \cite[Theorem~2]{Friz2006}, there exists a solution $Y$ to the rough differential equation~\eqref{eq:RDE} with $Y_0 = y_0$ and~\eqref{eq:Euler approx} holds. As a consequence, for every $j \in \mathbb{N}$, one has
  \begin{equation*}
    \sum_{k = 1}^{2^j} |Y_{k2^{-j}} - Y_{(k-1)2^{-j}}|^p \lesssim \sum_{k = 1}^{2^j}|\mathcal{E}_V(Y_{(k-1)2^{-j}},\X_{(k-1)2^{-j},k2^{-j}})|^p + \sum_{k = 1}^{2^j}\|\X\|_{\frac{1}{\alpha}\var;[(k-1)2^{-j},k2^{-j}]}^{\gamma p}.
  \end{equation*}
  From the expression~\eqref{eq:expression of Euler scheme} we can deduce that 
  \begin{equation*}
    |\mathcal{E}_V(Y_{(k-1)2^{-j}},\X_{(k-1)2^{-j},k2^{-j}})| \lesssim |V|_{\Lip^{\gamma - 1}}|\X_{(k-1)2^{-j},k2^{-j}}|.
  \end{equation*}
  Furthermore, by \cite[(7.22)]{Friz2010} we have 
  \begin{equation*}
    |\X_{(k-1)2^{-j},k2^{-j}}| \lesssim \max\Big(1,\sup_{t \in [0,1]}\|\X_t\|_{cc}^{[\frac{1}{\alpha}]}\Big)\rho(\X_{k2^{-j}}, \X_{(k-1)2^{-j}}).
  \end{equation*}
  Hence, by assumptions we obtain that
  \begin{equation}\label{eq:bound for Euler scheme}
    |\mathcal{E}_V(Y_{(k-1)2^{-j}},\X_{(k-1)2^{-j},k2^{-j}})| \lesssim \rho(\X_{k2^{-j}}, \X_{(k-1)2^{-j}}). 
  \end{equation}
  On the other hand, by \cite[Corollary~2.12]{Friz2018} we get 
  \begin{equation}\label{eq:bound for variation norm}
    \|\X\|_{\frac{1}{\alpha}\var;[(k-1)2^{-j},k2^{-j}]}^{p} \lesssim \|\X\|_{W^{\alpha}_p;[(k-1)2^{-j},k2^{-j}]}^p 2^{-j(\alpha p - 1)}.
  \end{equation}
  Inserting~\eqref{eq:bound for Euler scheme} and~\eqref{eq:bound for variation norm} into the above estimate, we arrive at
  \begin{align*}
    \sum_{k = 1}^{2^j} |Y_{k2^{-j}} - Y_{(k-1)2^{-j}}|^p \lesssim \sum_{k = 1}^{2^j}\rho(\X_{k2^{-j}}, \X_{(k-1)2^{-j}})^p + \sum_{k = 1}^{2^j}\Big(\|\X\|_{W^{\alpha}_p;[(k-1)2^{-j},k2^{-j}]}^p 2^{-j(\alpha p - 1)}\Big)^\gamma.
  \end{align*}
  It follows that
  \begin{align}\label{eq:Rosenbaum norm}
    \sum_{j \ge 0} 2^{j(\alpha p - 1)}\sum_{k = 1}^{2^j} |Y_{k2^{-j}} - Y_{(k-1)2^{-j}}|^p \lesssim& \sum_{j \ge 0} 2^{j(\alpha p - 1)}\sum_{k = 1}^{2^j}\rho(\X_{k2^{-j}}, \X_{(k-1)2^{-j}})^p \nonumber\\
    &+ \sum_{j \ge 0} 2^{j(\alpha p - 1)}\sum_{k = 1}^{2^j}\Big(\|\X\|_{W^{\alpha}_p;[(k-1)2^{-j},k2^{-j}]}^p 2^{-j(\alpha p - 1)}\Big)^\gamma.
  \end{align}
  Applying Theorem~\eqref{thm:Sobolev norm equivalent}, for the Euclidean metric $\rho$, to the first term in the right-hand side of inequality~\eqref{eq:Rosenbaum norm}, we conclude that
  \begin{equation*}  
    \sum_{j \ge 0} 2^{j(\alpha p - 1)}\sum_{k = 1}^{2^j}\rho(\X_{k2^{-j}}, \X_{(k-1)2^{-j}})^p \lesssim \iint_{[0,1]^2}\frac{\rho(\X_u,\X_v)^p}{|v - u|^{\alpha p+1}} \dd u \dd v.
  \end{equation*}
  Invoking that $\rho(g,h) \lesssim d_{cc}(g,h)$ locally uniformly on $G^{[\frac{1}{\alpha}]}(\R^d)$, we can further deduce that
  \begin{equation*}
    \sum_{j \ge 0} 2^{j(\alpha p - 1)}\sum_{k = 1}^{2^j}\rho(\X_{k2^{-j}}, \X_{(k-1)2^{-j}})^p \lesssim \iint_{[0,T]^2}\frac{d_{cc}(\X_u,\X_v)^p}{|v - u|^{\alpha p+1}} \dd u \dd v = \|\X\|_{W^{\alpha}_p}^p
  \end{equation*}
  and thus
  \begin{equation}\label{eq:bound for the first term}
    \sum_{j \ge 0} 2^{j(\alpha p - 1)}\sum_{k = 1}^{2^j}\rho(\X_{k2^{-j}}, \X_{(k-1)2^{-j}})^p \lesssim \|\X\|_{W^{\alpha}_p}^p.
  \end{equation}
  Let us now turn to the second term in the right-hand side of~\eqref{eq:Rosenbaum norm}. Since $\gamma >\frac{1}{\alpha} > 1$, the elementary inequality $\sum|a_i|^\gamma \leq (\sum |a_i|)^{\gamma}$ implies that
  \begin{align*}
    \sum_{j \ge 0} 2^{j(\alpha p - 1)}\sum_{k = 1}^{2^j}\Big(\|\X\|_{W^{\alpha}_p;[(k-1)2^{-j},k2^{-j}]}^p & 2^{-j(\alpha p - 1)}\Big)^\gamma\\
    &\lesssim \Big(\sum_{j \ge 0} \sum_{k = 1}^{2^j}2^{-j(\alpha p - 1)(1-\frac{1}{\gamma})}  \|\X\|_{W^{\alpha}_p;[(k-1)2^{-j},k2^{-j}]}^p \Big)^\gamma.
  \end{align*}
  Since $1 - \frac{1}{\gamma} > 0$ and $\alpha p - 1 > 0$, using the super-additivity of the control function $\omega(s,t) := \|\X\|_{W^{\alpha}_p;[s,t]}^p$, we can immediately deduce that
  \begin{equation}\label{eq:bound for the second term}
    \sum_{j \ge 0} 2^{j(\alpha p - 1)}\sum_{k = 1}^{2^j}\Big(\|\X\|_{W^{\alpha}_p;[(k-1)2^{-j},k2^{-j}]}^p 2^{-j(\alpha p - 1)}\Big)^\gamma \lesssim  \|\X\|_{W^{\alpha}_p}^{\gamma p}.
  \end{equation}
  Inserting the bounds~\eqref{eq:bound for the first term} and~\eqref{eq:bound for the second term} into inequality~\eqref{eq:Rosenbaum norm} and noting that the left-hand side of~\eqref{eq:Rosenbaum norm} is equivalent to the $p$-th power of the $W^{\alpha}_p$-norm of $Y$ due to Theorem~\ref{thm:Sobolev norm equivalent}, we finally obtain that
  \begin{equation*}
    \|Y\|_{W^{\alpha}_p} \lesssim \|\X\|_{W^{\alpha}_p} + \|\X\|_{W^{\alpha}_p}^{\gamma},
  \end{equation*}
  where the proportionality constant depends continuously on $M$ and is increasing in $M$ (in fact, we may choose $f(M) := \max(1, M^{[\frac{1}{\alpha}]})$). This completes the proof.
\end{proof}

\section{Continuity of the It\^o--Lyons map on Sobolev spaces}\label{sec: Ito map}

If the vector field~$V$ belongs even to $\Lip^\gamma$ rather than $\Lip^{\gamma - 1}$ for $\gamma >1/\alpha$, then classical results from rough path theory (see, e.g., \cite[Theorem~10.26]{Friz2010}) imply the uniqueness of the solution~$Y$ to the rough differential equation~\eqref{eq:RDE}. Recalling that the solution~$Y$ is an element of $W^\alpha_p([0,T];\R^e)$ by Proposition~\ref{prop:Young-Love inequality RDE setting}, the It\^o--Lyons map~$\Phi$ given by
\begin{equation}\label{eq:Ito-Lyons map in Sobolev setup}
  \Phi \colon \R^e \times \Lip^\gamma \times W^{\alpha}_p([0,1];G^{[\frac{1}{\alpha}]}(\R^d)) \rightarrow W^{\alpha}_p([0,1];\R^e) \quad \text{via} \quad  \Phi(y_0,V,\X) := Y,
\end{equation}
where $Y$ denotes the unique solution to rough differential equation~\eqref{eq:RDE} given the input $(y_0,V,\X)$, is well-defined.

One of the central results of rough path theory is the local Lipschitz continuity of the It\^o--Lyons map, which, of course, crucially depends on the chosen topology. We now establish the local Lipschitz continuity of the It\^o--Lyons map acting on the space of Sobolev rough paths, as defined in~\eqref{eq:Ito-Lyons map in Sobolev setup}.

\begin{theorem}\label{thm:Ito-Lyons map continuity}
  Let $\alpha \in (0,1)$, $\gamma >1$ and $p \in (1,+\infty)$ be such that $\alpha > 1/p$ and $\gamma > 1/\alpha $. Then, the It\^o--Lyons map $\Phi$ as defined in~\eqref{eq:Ito-Lyons map in Sobolev setup} is locally Lipschitz continuous with respect to the initial value, vector field and the driving signal, that is, for $y^i_0 \in \R^e$, $V^i \in \Lip^\gamma$ and $\X^i \in W^\alpha_p([0,1];G^{[\frac{1}{\alpha}]}(\R^d))$ satisfying
  \begin{equation*}
    \|\X^i\|_{W^{\alpha}_p} \le b \quad \text{and} \quad |V^i |_{\Lip^\gamma} \le l, \quad i=1,2,
  \end{equation*}
  for some $b,l>0$, with corresponding solution $Y^i = \Phi(y^i_0,V^i,\X^i)$, there exists a constant $C = C(b,l,\gamma,\alpha, p,T) \ge 1$ such that
  \begin{equation*}
    \|Y^1 - Y^2\|_{W^{\alpha}_p} \le C\Big( |V^1 - V^2 |_{\Lip^{\gamma - 1}} + |y^1_0 - y^2_0| + \hat{\rho}_{W^{\alpha}_p}(\X^1, \X^2) + \rho_{\tilde{V}^\alpha_p}(\X^1,\X^2)\Big).
  \end{equation*}
\end{theorem}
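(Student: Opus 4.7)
The plan is to adapt the Euler-scheme analysis of Proposition~\ref{prop:Young-Love inequality RDE setting} from a single solution to the difference of two. By the discrete characterization in Theorem~\ref{thm:Sobolev norm equivalent}, it suffices to bound
\[
\sum_{j \ge 0} 2^{j(\alpha p - 1)} \sum_{i=1}^{2^j} \bigl|Y^1_{(i-1)2^{-j}, i 2^{-j}} - Y^2_{(i-1)2^{-j}, i 2^{-j}}\bigr|^p.
\]
On a generic interval $[s,t]$ I decompose
\[
Y^1_{s,t} - Y^2_{s,t} = \bigl(\mathcal{E}_{V^1}(Y^1_s,\X^1_{s,t}) - \mathcal{E}_{V^2}(Y^2_s,\X^2_{s,t})\bigr) + (r^1_{s,t} - r^2_{s,t}),
\]
where each $r^i_{s,t}$ is the Euler remainder, controlled by $(|V^i|_{\Lip^{\gamma-1}}\|\X^i\|_{\frac{1}{\alpha}\var;[s,t]})^\gamma$ via~\eqref{eq:Euler approx}, together with a natural stability counterpart for $|r^1_{s,t} - r^2_{s,t}|$ in terms of $\frac{1}{\alpha}\var$-norms of $\X^1 - \X^2$ and of $\X^1,\X^2$, plus factors involving $|V^1 - V^2|_{\Lip^{\gamma - 1}}$ and $|Y^1_s - Y^2_s|$ (cf.\ \cite[Chapter~10]{Friz2010}).

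The Euler-scheme difference expands via~\eqref{eq:expression of Euler scheme} into a sum over levels $k = 1, \dots, [1/\alpha]$ of iterated vector fields evaluated at $Y^1_s$ or $Y^2_s$, paired with $\pi_k(\X^1_{s,t})$ or $\pi_k(\X^2_{s,t})$. Telescoping produces three kinds of contributions: a linear term in $\pi_k(\X^1_{s,t} - \X^2_{s,t})$, a term in $|V^1 - V^2|_{\Lip^{\gamma-1}}$, and a term proportional to $|Y^1_s - Y^2_s|$. The dyadic sums $2^{j(\alpha p - 1)} |\pi_k(\X^1_{(i-1)2^{-j}, i 2^{-j}} - \X^2_{(i-1)2^{-j}, i 2^{-j}})|^p$ raised to the power $p/k$ reproduce precisely $\hat\rho^{(k)}_{W^\alpha_p}(\X^1,\X^2)^p$ up to bounded prefactors depending on $\|\X^i\|_{W^\alpha_p}$, which supplies the $\hat\rho_{W^\alpha_p}$ contribution in the statement. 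The $V^1 - V^2$ block factors out as $|V^1 - V^2|_{\Lip^{\gamma-1}}$ times powers of $\|\X^i\|_{W^\alpha_p}$, using the Sobolev-variation embedding to convert $\frac{1}{\alpha}\var$ pieces into Sobolev pieces.

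For the nonlinear Euler remainder $r^1_{s,t} - r^2_{s,t}$, the pointwise bound involves $\|\X^1 - \X^2\|_{\frac{1}{\alpha}\var;[s,t]}$ at each level, multiplied by a $(\gamma-1)$-th power in $\|\X^i\|_{\frac{1}{\alpha}\var;[s,t]}$. Raising to the $p/k$-th power and summing the weighted dyadic contributions is exactly the operation that defines $\rho^{(k)}_{\tilde V^{\alpha,p}}$, so this block is absorbed into $\rho_{\tilde V^{\alpha,p}}(\X^1,\X^2)$ times powers of $\|\X^i\|_{W^\alpha_p}$. The $\gamma$-th-power excess in the residual terms is handled via \cite[Proposition~4.3]{Liu2020} giving $\|\X^i\|_{\frac{1}{\alpha}\var;[u,v]} \lesssim \|\X^i\|_{W^\alpha_p;[u,v]} |v-u|^{\alpha - 1/p}$, together with super-additivity of $\omega(u,v) := \|\X^1\|_{W^\alpha_p;[u,v]}^p + \|\X^2\|_{W^\alpha_p;[u,v]}^p$, exactly as in inequalities~\eqref{eq:bound for the first term}--\eqref{eq:bound for the second term}.

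The principal obstacle is the coefficient multiplying $|Y^1_s - Y^2_s|$ that appears in both the Euler expansion and in the remainder stability estimate: it is not absolutely summable at a rate that beats the $2^{j(\alpha p - 1)}$ weight, so one cannot simply iterate the pointwise bound globally. To close the argument I would localize the estimate to a short subinterval $[u,v]$ on which $\omega(u,v) \le \varepsilon$ for a threshold $\varepsilon = \varepsilon(\alpha,p,\gamma,l) > 0$ that makes the $Y$-dependent coefficient strictly contractive, giving a local Lipschitz estimate of the desired form. A standard patching across a finite partition $0 = \tau_0 < \tau_1 < \dots < \tau_K = 1$ chosen so that $\omega(\tau_{\ell-1}, \tau_\ell) \le \varepsilon$ for each $\ell$ then yields the global bound; super-additivity of $\omega$ ensures $K$ is controlled by $b$, so that the resulting constant $C = C(b,l,\gamma,\alpha,p,T)$ depends continuously on $b$ and $l$ as required.
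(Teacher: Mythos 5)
Your overall skeleton (a pointwise estimate on increments $|Y^1_{s,t}-Y^2_{s,t}|$, then dyadic summation with weights $2^{j(\alpha p-1)}$, the discrete characterization of Theorem~\ref{thm:Sobolev norm equivalent}, the embedding $\|\X\|_{\frac{1}{\alpha}\var;[u,v]}\lesssim\|\X\|_{W^\alpha_p;[u,v]}|v-u|^{\alpha-1/p}$ and super-additivity of $\|\X\|_{W^\alpha_p;[\cdot,\cdot]}^p$) matches the structure of the paper's proof. The genuine gap is in how you obtain the pointwise estimate itself. The paper does not rebuild it from the Euler scheme: it imports the full local Lipschitz estimate for RDE solutions from \cite[Theorem~10.26]{Friz2010} in the form~\eqref{eq:differnece between two solutions}, after observing (Remark~\ref{rmk: modifying the proof of theorem 10.26 in FV18}) that the function $\omega^\prime$ there need not be super-additive, and then chooses $\omega$ and $\omega^\prime$ normalized by $\rho^{(k)}_{\tilde V^{\alpha}_p}$ and $\hat\rho^{(k)}_{W^\alpha_p}$ so that $\rho_{\frac{1}{\alpha}\text{-}\omega}\le\rho_{\tilde V^\alpha_p}$ and $\rho_{\frac{1}{\alpha}\text{-}\omega^\prime}\le\hat\rho_{W^\alpha_p}$. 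In your proposal the analogous ingredient is the asserted ``natural stability counterpart'' for the difference of Euler remainders $|r^1_{s,t}-r^2_{s,t}|$. This is not a consequence of~\eqref{eq:Euler approx}, which only bounds each remainder separately by $(|V^i|_{\Lip^{\gamma-1}}\|\X^i\|_{\frac{1}{\alpha}\var;[s,t]})^\gamma$ and gives nothing small when $\X^1\approx\X^2$, $V^1\approx V^2$; a difference bound with the required structure is precisely the hard Davie/Friz--Victoir Lipschitz estimate (geodesic approximations, ODE stability along bounded-variation lifts, plus the Gronwall iteration), i.e.\ the content of \cite[Theorem~10.26]{Friz2010}. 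Citing ``Chapter~10'' loosely and treating this as known leaves the analytic core of the theorem unproved; if instead you invoke that theorem in its quantitative form, you are essentially back to the paper's route and your separate Euler-telescoping and contraction machinery becomes redundant.

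A second, smaller soft spot is your treatment of the coefficient multiplying $|Y^1_s-Y^2_s|$. Your localization-and-patching plan (choose $\varepsilon$, split $[0,1]$ at points $\tau_\ell$ with $\omega(\tau_{\ell-1},\tau_\ell)\le\varepsilon$, propagate by a discrete Gronwall) is plausible, but it is only sketched: you still need a uniform-norm Lipschitz bound on $Y^1-Y^2$ to feed into the dyadic sums (the dyadic intervals straddle the $\tau_\ell$), and that uniform bound is again the classical Lipschitz estimate of the It\^o--Lyons map, so the circularity with the previous point reappears. In the paper this difficulty never arises because the imported estimate~\eqref{eq:differnece between two solutions} already depends only on $|y^1_0-y^2_0|$ (not on $|Y^1_s-Y^2_s|$), the Gronwall factor being packaged in $\exp(Cl\omega^\prime(s,t)+Cl^{1/\alpha}\omega(0,1))$, which is then controlled by $\omega^\prime\le\omega$ and $\omega(0,1)\lesssim b^{1/\alpha}+1$. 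Your bookkeeping for the $\hat\rho^{(k)}_{W^\alpha_p}$ block (the passage from $|\pi_k(\X^1_{s,t}-\X^2_{s,t})|^p$ in the dyadic sum to the exponent $p/k$ appearing in $\hat\rho^{(k)}_{W^\alpha_p}$) can be made to work, but as written it glosses over exactly the normalization issue that the paper's choice of $\omega^{\prime,(k)}$ is designed to handle.
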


\begin{proof} 
  A careful inspection of the proof of \cite[Theorem~10.26]{Friz2010} (see also Remark \ref{rmk: modifying the proof of theorem 10.26 in FV18} below) reveals that if $\omega$ is a control function on $\Delta$ and $\omega^\prime$ is a non-negative function on $\Delta$ such that
  $$
    \|\X^i\|_{\frac{1}{\alpha}\text{-}\omega}:= \sup_{0\le s \le t \le 1}\frac{\|\X^i_{s,t}\|_{cc}}{\omega(s,t)^{\alpha}} \le 1
    \quad \text{and}\quad
    \|\X^i\|_{\frac{1}{\alpha}\text{-}\omega^\prime}:= \sup_{0\le s \le t \le 1}\frac{\|\X^i_{s,t}\|_{cc}}{\omega^\prime(s,t)^{\alpha}}\le 1, 
  $$
  for $i=1,2$, then for any $s<t$ in $[0,1]$,
  \begin{align}\label{eq:differnece between two solutions}
    \begin{split}
    |Y^1_{s,t} -& Y^2_{s,t}|\\
    \lesssim &(l|y^1_0 - y^2_0| + |V^1 - V^2|_{\Lip^{\gamma - 1}} + l\rho_{\frac{1}{\alpha}\text{-}\omega^\prime}(\X^1,\X^2))\omega^\prime(s,t)^\alpha \exp(Cl\omega^\prime(s,t) + Cl^{\frac{1}{\alpha}}\omega(0,1))  \\
    &+ (l|y^1_0 - y^2_0| + |V^1 - V^2|_{\Lip^{\gamma - 1}} + l\rho_{\frac{1}{\alpha}\text{-}\omega}(\X^1,\X^2))l^{\gamma - 1}\omega(s,t)^{\gamma \alpha}\exp( Cl^{\frac{1}{\alpha}}\omega(0,1)), 
    \end{split}
  \end{align}
  where $\rho_{\frac{1}{\alpha}\text{-}\omega}(\X^1,\X^2) := \sum_{k = 1,\dots,[\frac{1}{\alpha}]}\sup_{0\le s \le t \le 1}\frac{|\pi_k(\X^1_{s,t} - \X^2_{s,t})|}{\omega(s,t)^{\alpha k}}$ and the same expression holds for $\rho_{\frac{1}{\alpha}\text{-}\omega^\prime}(\X^1,\X^2)$. Let us define
  \begin{equation}\label{eq:control function 1}
    \omega(s,t) := \|\X^1\|^{\frac{1}{\alpha}}_{\frac{1}{\alpha}\var;[s,t]} + \|\X^1\|^{\frac{1}{\alpha}}_{\frac{1}{\alpha}\var;[s,t]} + \sum_{k = 1}^{[\frac{1}{\alpha}]}\omega^{(k)}_{\X^1,\X^2}(s,t),
  \end{equation}
  where $\omega^{(k)}_{\X^1,\X^2}(s,t) := \Big(\frac{\rho^{(k)}_{\frac{1}{\alpha}\var;[s,t]}(\X^1,\X^2)}{\rho^{(k)}_{\tilde{V}^{\alpha}_p}(\X^1,\X^2)}\Big)^{\frac{1}{\alpha k}}$ and $\rho^{(k)}_{\frac{1}{\alpha}\var}$ is the inhomogeneous variation metric defined in \cite[Definition~8.6]{Friz2010}.
  Furthermore, we set
  \begin{equation*}
    \omega^{\prime}(s,t) := \|\X^1_{s,t}\|_{cc}^{\frac{1}{\alpha}} + \|\X^2_{s,t}\|_{cc}^{\frac{1}{\alpha}} + \sum_{k = 1}^{[\frac{1}{\alpha}]}\omega^{\prime,(k)}_{\X^1,\X^2}(s,t)
  \end{equation*}
  with  $\omega^{\prime,(k)}_{\X^1,\X^2}(s,t) := \Big(\frac{|\pi_k(\X^1_{s,t} - \X^2_{s,t})|}{\hat{\rho}^{(k)}_{W^{\alpha}_p}(\X^1,\X^2)}\Big)^{\frac{1}{\alpha k}}$. By definition, we see that for such $\omega$ and $\omega^\prime$ it holds that $\|\X^i\|_{\frac{1}{\alpha}\text{-}\omega} \le 1$ and $ \|\X^i\|_{\frac{1}{\alpha}\text{-}\omega^\prime}\le 1$ for $i=1,2$. Moreover, since
  $$
    |\pi_k(\X^1_{s,t} - \X^2_{s,t})|\leq \frac{\rho^{(k)}_{\frac{1}{\alpha}\var;[s,t]}(\X^1,\X^2)}{\rho^{(k)}_{\tilde{V}^{\alpha}_p}(\X^1,\X^2)} \rho^{(k)}_{\tilde{V}^{\alpha}_p}(\X^1,\X^2)\leq \omega(s,t)^{\alpha k}\rho^{(k)}_{\tilde{V}^{\alpha}_p}(\X^1,\X^2),
  $$
  we indeed have $\rho_{\frac{1}{\alpha}\text{-}\omega}(\X^1,\X^2)\le \rho_{\tilde{V}^{\alpha}_p}(\X^1,\X^2)$. By the same reasoning we can also deduce that $\rho_{\frac{1}{\alpha}\text{-}\omega^\prime}(\X^1,\X^2)\le \hat{\rho}_{W^{\alpha}_p}(\X^1,\X^2)$. Although $\omega^\prime$ is not a control function, it holds that $\omega^\prime(s,t) \le \omega(s,t)$ for all $s<t$ in $[0,1]$. Hence, we can bound the $\omega^\prime(s,t)$ appearing in the exponential function in~\eqref{eq:differnece between two solutions} by $\omega(0,1)$. All above observations allow us to reduce estimate~\eqref{eq:differnece between two solutions} to
  \begin{align*}
    |Y^1_{s,t} - Y^2_{s,t}| \lesssim &\Big(l|y^1_0 - y^2_0| + |V^1 - V^2|_{\Lip^{\gamma - 1}} + l\hat{\rho}_{W^{\alpha}_p}(\X^1,\X^2) + l\rho_{\tilde{V}^{\alpha}_p}(\X^1,\X^2) \Big)\exp(Cl^{\frac{1}{\alpha}}\omega(0,1)) \\
    &\times (\omega^\prime(s,t)^\alpha  + \omega(s,t)^{\gamma \alpha}).
  \end{align*}
  For simplicity we denote 
  $$
    F:= \Big(l|y^1_0 - y^2_0| + |V^1 - V^2|_{\Lip^{\gamma - 1}} + l\hat{\rho}_{W^{\alpha}_p}(\X^1,\X^2) + l\rho_{\tilde{V}^{\alpha}_p}(\X^1,\X^2) \Big)\exp(Cl^{\frac{1}{\alpha}}\omega(0,1)),
  $$ which is a constant independent of $(s,t)$. Then we obtain that
  \begin{align}\label{eq:Rosenbaum norm of difference of two solutions}
    \begin{split}
    \sum_{j \ge 0} 2^{j(\alpha p - 1)}\sum_{i = 1}^{2^j} |Y^1_{s,t} - Y^2_{s,t}|^p \lesssim &F^p\Big(\sum_{j \ge 0}2^{j(\alpha p - 1)}\sum_{i = 1}^{2^j}\omega^\prime((i-1)2^{-j}, i2^{-j})^{\alpha p} \\
    &\qquad\quad + \sum_{j \ge 0}2^{j(\alpha p - 1)}\sum_{i = 1}^{2^j}\omega((i-1)2^{-j}, i2^{-j})^{\gamma \alpha p}\Big).
    \end{split}
  \end{align}
  By definition, we have 
  \begin{align*}
    \omega^\prime((i-1)2^{-j}, i2^{-j})^{\alpha p}
    &\lesssim d_{cc}(\X^1_{(i-1)2^{-j}},\X^1_{i2^{-j}})^p + d_{cc}(\X^2_{(i-1)2^{-j}},\X^2_{i2^{-j}})^p \\
    &\quad+\sum_{k=1}^{[\frac{1}{\alpha}]}|\pi_k(\X^1_{(i-1)2^{-j},i2^{-j}} - \X^2_{(i-1)2^{-j},i2^{-j}})|^{\frac{p}{k}}\hat{\rho}^{(k)}_{W^{\alpha}_p}(\X^1,\X^2)^{-\frac{p}{k}}.
  \end{align*}
  In view of the definition of $\hat{\rho}^{(k)}_{W^{\alpha}_p}$ we observe that 
  $$
    \sum_{j \ge 0}2^{j(\alpha p -1)}\sum_{i=1}^{2^j}|\pi_k(\X^1_{(i-1)2^{-j},i2^{-j}} - \X^2_{(i-1)2^{-j},i2^{-j}})|^{\frac{p}{k}} = \hat{\rho}^{(k)}_{W^{\alpha}_p}(\X^1,\X^2)^{\frac{p}{k}} 
  $$
  and thus
  \begin{align*}
    &\sum_{j \ge 0}2^{j(\alpha p - 1)}\sum_{i = 1}^{2^j}\omega^\prime((i-1)2^{-j}, i2^{-j})^{\alpha p}\\
    &\quad= \sum_{j \ge 0}2^{j(\alpha p - 1)}\sum_{i = 1}^{2^j}d_{cc}(\X^1_{(i-1)2^{-j}},\X_{i2^{-j}})^p + \sum_{j \ge 0}2^{j(\alpha p - 1)}\sum_{i = 1}^{2^j}d_{cc}(\X^2_{(i-1)2^{-j}},\X_{i2^{-j}})^p + [\frac{1}{\alpha}].
  \end{align*}
  By Theorem~\ref{thm:Sobolev norm equivalent} the right-hand side of the above inequality is bounded by the term 
  $$
    C(\|\X^1\|_{W^\alpha_p}^p + \|\X^2\|_{W^\alpha_p}^p) + [\frac{1}{\alpha}]
  $$
  for some constant~$C$ only depending on~$\alpha$ and~$p$, therefore the term 
  $$
    \sum_{j \ge 0}2^{j(\alpha p - 1)}\sum_{i = 1}^{2^j}\omega^\prime((i-1)2^{-j}, i2^{-j})^{\alpha p}
  $$
  is bounded by $C(b^p + 1)$ due to our hypothesis. 

  On the other hand, in view of the definition of $\omega$ (cf.~\eqref{eq:control function 1}), one has
  \begin{align*}
    \sum_{j \ge 0}2^{j(\alpha p - 1)}& \sum_{i = 1}^{2^j}\omega((i-1)2^{-j}, i2^{-j})^{\gamma \alpha p}\\
    &\lesssim \sum_{j \ge 0}2^{j(\alpha p - 1)}\sum_{i = 1}^{2^j}\|\X^1\|_{\frac{1}{\alpha}\var;[(i-1)2^{-j},i2^{-j}]}^{\gamma p} 
    +\sum_{j \ge 0}2^{j(\alpha p - 1)}\sum_{i = 1}^{2^j}\|\X^2\|_{\frac{1}{\alpha}\var;[(i-1)2^{-j},i2^{-j}]}^{\gamma p} \\
    &\quad+ \sum_{k=1}^{[\frac{1}{\alpha}]}\sum_{j \ge 0}2^{j(\alpha p - 1)}\sum_{i = 1}^{2^j}\rho^{(k)}_{\frac{1}{\alpha}\var;[(i-1)2^{-j},i2^{-j}]}(\X^1,\X^2)^{\frac{p}{k}\gamma} \rho^{(k)}_{\tilde{V}^\alpha_p}(\X^1,\X^2)^{-\frac{p}{k}\gamma}.
  \end{align*}
  From the proof of Proposition~\ref{prop:Young-Love inequality RDE setting} we see that the right-hand side of the above inequality is bounded by $C(\|\X^1\|_{W^{\alpha}_p}^p + \|\X^2\|_{W^{\alpha}_p}^p)^\gamma$ for some constant~$C$ only depending on $\alpha,p$ and $\gamma$. 
  
  For the last term, note that in the proof of \cite[Theorem~3.3]{Friz2018} one has 
  $$
    \rho^{(k)}_{\frac{1}{\alpha}\var;[(i-1)2^{-j},i2^{-j}]}(\X^1,\X^2) \leq \rho^{(k)}_{\tilde{V}^{\alpha}_p;[(i-1)2^{-j},i2^{-j}]}(\X^1,\X^2)2^{-j(\alpha - \frac{1}{p})k}.
  $$  Since $\gamma > 1/\delta > 1$ and $\rho^{(k)}_{\tilde{V}^{\alpha}_p;[s,t]}(\X^1,\X^2)^{\frac{p}{k}}$ is super-additive as a function on $\Delta$, we can deduce that
  \begin{align*}
    \sum_{j \ge 0}2^{j(\alpha p - 1)}\sum_{i = 1}^{2^j} & \rho^{(k)}_{\frac{1}{\alpha}\var;[(i-1)2^{-j},i2^{-j}]}(\X^1,\X^2)^{\frac{p}{k}\gamma} \\
    &\lesssim \Big(\sum_{j \ge 0}2^{-j(\alpha p - 1)(1- \frac{1}{\gamma})}\sum_{i = 1}^{2^j}\rho^{(k)}_{\tilde{V}^{\alpha}_p;[(i-1)2^{-j},i2^{-j}]}(\X^1,\X^2)^{\frac{p}{k}} \Big)^\gamma
    \lesssim \rho^{(k)}_{\tilde{V}^{\alpha}_p}(\X^1,\X^2)^{\frac{p}{k}\gamma},
  \end{align*}
  for every $k = 1,\dots,[\frac{1}{\alpha}]$. Hence, we obtain that
  \begin{equation*}
    \sum_{j \ge 0}2^{j(\alpha p - 1)}\sum_{i = 1}^{2^j}\omega((i-1)2^{-j}, i2^{-j})^{\gamma \alpha p} \lesssim b^{p\gamma} + 1.
  \end{equation*}
  Note that from the above estimate we also deduce that $\omega(0,1) \le C(b^{\frac{1}{\alpha}} + 1)$ for some constant~$C$ only depending on $\alpha,p$ and $\gamma$. Now inserting all above estimates into~\eqref{eq:Rosenbaum norm of difference of two solutions} and using Theorem~\ref{thm:Sobolev norm equivalent}, we find that
  \begin{equation*}
    \|Y^1 - Y^2\|_{W^{\alpha}_p} \lesssim C\Big(|V^1 - V^2|_{\Lip^{\gamma - 1}} + |y^1_0 - y^2_0| + \hat{\rho}_{W^{\alpha}_p}(\X^1, \X^2) + \rho_{\tilde{V}^\alpha_p}(\X^1,\X^2) \Big).
  \end{equation*}
\end{proof}

\begin{remark}\label{rmk: modifying the proof of theorem 10.26 in FV18}
  Let us briefly show that how to derive \eqref{eq:differnece between two solutions} from \cite[Theorem 10.26]{Friz2010}. First note that if $\omega^\prime$ is a non-negative function on $\Delta$ such that $\|\mathbf{X}^i\|_{\frac{1}{\alpha}\text{-}\omega^\prime} \le 1$ for $i=1,2$, then for any $s<t$, we can find paths $x^{1,s,t}$ and $x^{2,s,t}$ such that 
  $$
    S_{[\frac{1}{\alpha}]}(x^{i,s,t}) = \X^i_{s,t}, \quad i=1,2,
  $$
  and 
  $$
    \int_s^t |\d x^{i,s,t}_r| \le c_1 \omega^\prime(s,t)^\alpha, \quad i=1,2, \quad \int_s^t |\d x^{1,s,t}_r-\d x^{2,s,t}_r| \le c_1\omega^\prime(s,t)\rho_{\frac{1}{\alpha}\text{-}\omega^\prime;[s,t]}(\X^1,\X^2),
  $$
  for some constant $c_1$ only depending on $\alpha$. This argument is actually contained in the proof of \cite[Theorem~10.26, pp. 234]{Friz2010} but with an additional assumption that $\omega^\prime$ is a control function. However, as this argument is essentially a direct application of \cite[Proposition~7.64]{Friz2010}, one can easily see that the super-additivity of $\omega^\prime$ plays no role therein and what we need is just the bound $\|\mathbf{X}^i\|_{\frac{1}{\alpha}\text{-}\omega^\prime} \le 1$ for $i=1,2$. As a consequence, by \cite[Theorem~3.18]{Friz2010} or \cite[Theorem~10.26, pp. 235--236]{Friz2010}, we can replace the control $\omega(s,t)$ appeared in \cite[(10.24)]{Friz2010} by $\omega^\prime$, and obtain that
  \begin{align*}
    &|\pi_{(V^1)}(s,Y^1_s;x^{1,s,t})_{s,t} - \pi_{(V^2)}(s,Y^2_s;x^{2,s,t})_{s,t}| \\ 
    &\quad\le C (l|y^1_0 - y^2_0| + |V^1 - V^2|_{\Lip^{\gamma - 1}} 
    + l\rho_{\frac{1}{\alpha}\text{-}\omega^\prime}(\X^1,\X^2))\omega^\prime(s,t)^\alpha \exp(Cl\omega^\prime(s,t)),
  \end{align*}
  which is exactly the first summand on the right hand-side of \eqref{eq:differnece between two solutions}. On the other hand, a direct application of the bound for $|\overline \Gamma_{s,t}|$ proved in \cite[Theorem~10.26, pp.~236]{Friz2010} gives 
  $$
    |\overline \Gamma_{s,t}| \le C (l|y^1_0 - y^2_0| + |V^1 - V^2|_{\Lip^{\gamma - 1}} + l\rho_{\frac{1}{\alpha}\text{-}\omega}(\X^1,\X^2))l^{\gamma - 1}\omega(s,t)^{\gamma \alpha}\exp( Cl^{\frac{1}{\alpha}}\omega(0,1)),
  $$
  which is the second summand on the right hand-side of \eqref{eq:differnece between two solutions}. Now the inequality \eqref{eq:differnece between two solutions} follows immediately from the triangle inequality and the relation 
  $$
    |(Y^1_{s,t} - Y^2_{s,t}) - \overline \Gamma_{s,t}| = \Big|\pi_{(V^1)}(s,Y^1_s;x^{1,s,t})_{s,t} - \pi_{(V^2)}(s,Y^2_s;x^{2,s,t})_{s,t}\Big|, 
  $$
  which was established in \cite[Theorem~10.26, pp.~235]{Friz2010}.
\end{remark}

\begin{remark}\label{rmk:inhomogeneous rough path distances}
  One would expect that the inhomogeneous mixed H\"older-variation distance~$\rho_{\tilde{V}^\alpha_p}$ is not needed for the continuity statement of Theorem~\ref{thm:Ito-Lyons map continuity} and that $\rho_{\tilde{V}^\alpha_p}$ is dominated by the inhomogeneous Sobolev distance~$\hat{\rho}_{W^{\alpha}_p}$ as one can observe for the homogeneous Sobolev norms. However, at least if one wants to follow a similar approach as developed in the present work, this would require an extensive study of the inhomogeneous distances: First, it seems to require to generalize the Sobolev-variation embedding theorem for functions $f\colon [0,T]\to E $ provided in \cite{Friz2006} to functions $f\colon \Delta \to E$. Second, similar generalizations seem to be needed for the characterization of non-linear Sobolev spaces \cite{Liu2020} as well as for the embedding results in \cite{Friz2018}. These generalizations are outside the scope of the present article. 
\end{remark}


\providecommand{\bysame}{\leavevmode\hbox to3em{\hrulefill}\thinspace}
\providecommand{\MR}{\relax\ifhmode\unskip\space\fi MR }
\providecommand{\MRhref}[2]{%
  \href{http://www.ams.org/mathscinet-getitem?mr=#1}{#2}
}
\providecommand{\href}[2]{#2}

\end{document}